\newtheorem{lemma}{Lemma}
\newtheorem{theorem}{Theorem} 
\newtheorem{definition}{Definition}
\newtheorem{claim}{Claim}
\newtheorem{conjecture}{Conjecture}
\newtheorem{remark}{Remark}
\newcommand{\B}{\mathcal{B}}
\newcommand{\F}{\mathcal{F}}
\newcommand{\h}{\mathcal{H}}
\newcommand{\abs}[1]{\left\lvert{#1}\right\rvert}
\newcommand{\floor}[1]{\left\lfloor{#1}\right\rfloor}
\DeclareMathOperator{\ex}{ex}
\begin{document}
\title{Tur\'an numbers of Berge trees}

\author[1,2]{Ervin Gy\H{o}ri} 
\author[1,2]{Nika Salia}
\author[3,4]{Casey Tompkins}
\author[2,5]{Oscar Zamora} 

\affil[1]{Alfr\'ed R\'enyi Institute of Mathematics, Hungarian Academy of Sciences. \newline
 \texttt{gyori.ervin@renyi.mta.hu, nika@renyi.hu}}
\affil[2]{Central European University, Budapest.\par
 \texttt{oscar.zamoraluna@ucr.ac.cr}     }
 \affil[3] {Discrete Mathematics Group, Institute for Basic Science (IBS), Daejeon, Republic of Korea. \par
 \texttt{ctompkins496@gmail.com}}
\affil[4] {Karlsruhe Institute of Technology, Germany.
  }
 \affil[5]{Universidad de Costa Rica, San Jos\'e.}

\maketitle
\begin{abstract}

A classical conjecture of Erd\H{o}s and S\'os asks to determine the Tur\'an number of a tree.  We consider variants of this problem in the settings of hypergraphs and multi-hypergraphs.
In particular, for all $k$ and $r$, with $r \ge k (k-2)$, we show that any $r$-uniform hypergraph $\h$ with more than $\frac{n(k-1)}{r+1}$ hyperedges contains a Berge copy of any tree with $k$ edges different from the $k$-edge star. This bound is sharp when $r+1$ divides $n$ and for such values of $n$ we determine the extremal hypergraphs.

\end{abstract}

\section{Background}

We recall a classic theorem of Erd\H{o}s and Gallai~\cite{erdHos1959maximal}. 
\begin{theorem}[Erd\H{o}s, Gallai~\cite{erdHos1959maximal}]

Let $n$, $k$ be positive integers and let $G$ be an $n$-vertex graph containing no path of $k$ edges, then 
\[
e(G) \le \frac{(k-1)n}{2}.
\]
Equality is obtained if and only if $k$ divides $n$ and $G$ is the graph consisting of $n/k$ disjoint complete graphs of size $k$.  
\end{theorem}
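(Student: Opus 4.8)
The plan is to prove the theorem by induction on the number of vertices $n$, isolating the genuinely hard case with a rotation--extension (Pósa-type) lemma. It is convenient to argue the contrapositive of the inequality: I would show that if $e(G) > \frac{(k-1)n}{2}$ then $G$ contains a path with $k$ edges. The base cases $n \le k$ are immediate, since then $e(G) \le \binom{n}{2} = \frac{n(n-1)}{2} \le \frac{(k-1)n}{2}$, so the hypothesis is never met. For the inductive step the first move is to look for a low-degree vertex: if some $v$ satisfies $\deg(v) \le \frac{k-1}{2}$, then deleting it gives $e(G-v) > \frac{(k-1)(n-1)}{2}$, and the induction hypothesis supplies a $k$-edge path already in $G-v$. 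So the crux is the case where every degree exceeds $\frac{k-1}{2}$, which for integer degrees means $2\delta(G) \ge k$.

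To handle the high-minimum-degree case I would prove the standard lemma that a connected graph $H$ with minimum degree $\delta$ contains a path with at least $\min\{2\delta,\, n_H-1\}$ edges, where $n_H = |V(H)|$. The argument takes a longest path $P = v_0 v_1 \cdots v_\ell$; since $P$ cannot be extended, both endpoints have all their neighbors on $P$, whence $\ell \ge \delta$. If $\ell \ge 2\delta$ we are done, so suppose $\ell < 2\delta$. Then $\deg(v_0) + \deg(v_\ell) \ge 2\delta > \ell$, and a pigeonhole/rotation argument applied to the index sets $\{i : v_0 v_i \in E\}$ and $\{i : v_\ell v_{i-1} \in E\}$ produces a crossing pair, hence a cycle through all $\ell+1$ vertices of $P$. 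If $\ell + 1 < n_H$, connectivity yields a vertex outside this cycle adjacent to it, which would extend $P$ and contradict maximality; therefore $\ell = n_H - 1$. \emph{This is the main obstacle of the proof:} getting the rotation bookkeeping right so that high minimum degree is converted into a genuinely long path rather than merely a path of length $\delta$.

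With the lemma in hand the high-degree case closes quickly. Applying it to each connected component $H$ of $G$, and using $2\delta(H) \ge 2\delta(G) \ge k$ together with the assumption that $G$ has no path with $k$ edges, the lemma forces $\min\{k, n_H-1\} \le k-1$, hence $n_H \le k$. Then $e(H) \le \binom{n_H}{2} \le \frac{(k-1)n_H}{2}$, and summing over all components gives $e(G) \le \frac{(k-1)n}{2}$, completing the inequality.

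For the equality characterization I would track where these inequalities can be tight. In the high-degree case, $e(G) = \frac{(k-1)n}{2}$ forces $e(H) = \binom{n_H}{2}$ and $n_H = k$ for every component, so each component is a $K_k$ and $k \mid n$. It remains to rule out tightness in the low-degree branch: if some $v$ has $\deg(v) \le \frac{k-1}{2}$, then equality forces $\deg(v) = \frac{k-1}{2}$ (so $k$ is odd and $\deg(v) \ge 1$) and, by the induction hypothesis in its equality form, $G-v$ is a disjoint union of copies of $K_k$. But then any neighbor $u$ of $v$ lies in such a clique, whose Hamiltonian path of $k-1$ edges starting at $u$ extends through $v$ to a path with $k$ edges, a contradiction. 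Hence equality occurs only in the high-degree case, yielding exactly $G \cong (n/k)K_k$ with $k \mid n$, as claimed.
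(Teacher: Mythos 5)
Your proof is correct, but there is nothing in the paper to compare it against: the paper quotes this theorem as classical background with a citation to Erd\H{o}s--Gallai (1959) and never proves it, so the comparison defaults to an assessment of your argument on its own terms. What you give is the standard textbook proof, and it is sound: induction on $n$, deleting a vertex of degree at most $\frac{k-1}{2}$ when one exists, and otherwise reducing to $2\delta(G)\ge k$, which is closed by the rotation lemma that a connected graph $H$ has a path with at least $\min\{2\delta(H),\, n_H-1\}$ edges; your equality analysis correctly forces every component to be $K_k$ in the high-degree branch and kills tightness in the deletion branch by extending a Hamiltonian path of a clique of $G-v$ through $v$. Two small points to tidy up: (i) the parenthetical ``so $k$ is odd and $\deg(v)\ge 1$'' fails in the degenerate case $k=1$ (there $\deg(v)=0$ and no contradiction arises), though the characterization still holds trivially, so you should either assume $k\ge 2$ or dispose of $k=1$ separately; (ii) the statement is an ``if and only if,'' so you should also record the one-line converse that $\frac{n}{k}$ disjoint copies of $K_k$ meet the bound with equality and contain no path with $k$ edges. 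It is worth noting that your reduction step --- strip low-degree vertices to reach a subgraph of large minimum degree --- is precisely the philosophy the paper itself uses at the hypergraph level via Lemma~\ref{average} and Lemma~\ref{folklore}, where low-degree vertices and cluster hyperedges are removed so that a dense reduced sub-hypergraph (and hence a $(k-1)$-cluster) can be found; so your argument is a natural graph-level ancestor of the paper's method.
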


Erd\H{o}s and S\'os~\cite{erdos1984some} conjectured that the same bound would hold for any tree with $k$ edges. A proof of this conjecture for sufficiently large $k$ was announced in the 90's by Ajtai, Koml\'os, Simonovits and Szemer\'edi.  We will consider a variant of this problem in the setting of hypergraphs and multi-hypergraphs. We obtain exact results for the case of large uniformity.  

Given a hypergraph $\h$, we denote the vertex and edge sets of $\h$ by $V(\h)$ and $E(\h)$, respectively. We denote the number of vertices and hyperedges by $v(\h) = \abs{V(\h)}$ and $e(\h) = \abs{E(\h)} $. A hypergraph is said to be $r$-uniform if all of its hyperedges have size $r$. We now provide some definitions which we will need.

\begin{definition}
For a given uniformity $r$ and a fixed graph $G$, an $r$-uniform multi-hypergraph $\h$ is a  \emph{Berge copy} of G, if there exists an injection $f_1: V(G) \to V(\h)$ and a bijection $f_2:E(G) \to E(\h)$, such that if $e = \{v_{1},v_{2}\}\in E(G)$, then $\{f_1(v_{1}),f_1(v_{2})\} \subseteq f_2(e)$.  The set of Berge copies of $G$ is denoted by $\B G$. The sets $f_1(V(G))$ and $f_2(E(G))$ are called the \emph{defining} vertices and hyperedges, respectively.
\end{definition}

We recall the classical definition of the Tur\'an number of a family of hypergraphs. 

\begin{definition}
The Tur\'an number of a family of $r$-uniform hypergraphs $\F$, denoted $\ex_r(n,\F)$, is the maximum number of hyperedges in an $n$-vertex, $r$-uniform, simple-hypergraph which does not contain an isomorphic copy of $\h$, for all $\h \in \F$,  as a sub-hypergraph.  
\end{definition}

The same question may be asked for multi-hypergraphs, we denote  the Tur\'an number for multi-hypergraphs by $\ex_r^{multi}(n,\F)$. 

%\begin{definition}
%The multi-Tur\'an number of a family of $r$-uniform multi-hypergraphs $\F$, denoted $\ex_r^{multi}(n,\F)$, is the maximum number of hyperedges possible in an $n$-vertex $r$-uniform multi-hypergraph which does not contain any (isomorphic copy of) $\h \in \F$ as a sub-multi-hypergraph. When working in the multi-hypergraph setting $BG$ is redefined in the natural way.
%\end{definition}

\begin{remark}
If every hypergraph in $\F$ has at least $r+1$ vertices, then $\ex_r^{multi}(n,\F)$ is infinite, since a hypergraph on $r$ vertices and  multiple copies of the same hyperedge  is $\F$-free. 
\end{remark}

The classical theorem of Erd\H{o}s and Gallai was extended to Berge paths in $r$-uniform hypergraphs by Gy\H{o}ri, Katona and Lemons~\cite{gyorikatonalemons}. 

\begin{theorem}[Gy\H{o}ri, Katona, Lemons \cite{gyorikatonalemons}] \label{gkl}

	Let $n,k,r$ be positive integers and let $\h$ be an $r$-uniform hypergraph with no Berge path of length $k$.  If $k>r+1>3$, we have
	\begin{displaymath}
	e(\h) \le \frac{n}{k} \binom{k}{r}.
	\end{displaymath}
	If $r \ge k>2$, we have
	\begin{displaymath}
	e(\h) \le \frac{n(k-1)}{r+1}.
	\end{displaymath}
\end{theorem}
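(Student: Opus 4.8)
The plan is to pass to the \emph{incidence bipartite graph} of $\h$ and reduce the statement to a bipartite Erd\H{o}s--Gallai-type inequality, which I would then prove by a longest-path rotation argument. First I would form the bipartite graph $B$ with parts $X=V(\h)$ and $Y=E(\h)$, joining $v\in X$ to $e\in Y$ exactly when $v\in e$; thus $|X|=n$, $|Y|=e(\h)$, and every $y\in Y$ has degree precisely $r$. A Berge path $v_0e_1v_1\cdots e_kv_k$ of length $k$ corresponds exactly to a path $v_0e_1v_1\cdots e_kv_k$ in $B$ whose two endpoints lie in $X$ and which passes through exactly $k$ vertices of $Y$ (the alternation and the distinctness of the $v_i$ and $e_j$ are automatic for a path in a simple bipartite graph). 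Hence ``$\h$ has no Berge path of length $k$'' is equivalent to ``every path of $B$ with both endpoints in $X$ meets $Y$ in at most $k-1$ vertices,'' since an $X$--$X$ path through $k'\ge k$ vertices of $Y$ can be truncated to one through exactly $k$. The theorem then becomes: if $B$ is bipartite with $\deg_B(y)=r$ for all $y\in Y$ and every $X$--$X$ path meets $Y$ in at most $k-1$ vertices, then $|Y|\le \tfrac{k-1}{r+1}|X|$ when $r\ge k$, and $|Y|\le \tfrac1k\binom{k}{r}|X|$ when $k>r+1$.

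Both target bounds are linear in $|X|$, so I would first reduce to a single connected component of $B$; since components partition $X$ and $Y$, it suffices to prove the per-component inequality, and isolated vertices only help. Inside a component I would take a longest $X$--$X$ alternating path $P$, meeting $Y$ in some $\ell\le k-1$ vertices, and analyze its endpoints by rotation. The decisive observation in the regime $r\ge k$ is that $P$ uses at most $\ell+1\le k\le r$ vertices of $X$, i.e.\ no more than a single hyperedge contains; consequently, if $P$ cannot be extended at an $X$-endpoint $x$, then any neighbour $y\notin V(P)$ of $x$ would have all $r$ of its $X$-neighbours inside $V(P)$, which is impossible once $r>\ell+1$. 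Thus (away from the boundary case $r=k$, $\ell=k-1$) every edge of $B$ at $x$ is a path edge, and iterating this over the set of vertices reachable as endpoints via P\'osa-type rotations traps all the edges of the component on a bounded vertex set, forcing the ratio $\tfrac{k-1}{r+1}$; the extremal guide is a configuration of $k-1$ hyperedges on $r+1$ vertices. In the regime $k>r+1$ the hyperedges are small relative to the forbidden path length and the situation resembles the graph Erd\H{o}s--Gallai theorem; there I would run the same longest-path/rotation machinery (or a direct charging of hyperedges to vertices) to show that the local density cannot exceed that of disjoint complete $r$-graphs $K_k^{(r)}$, giving $\tfrac1k\binom{k}{r}$.

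The main obstacle is the rotation step together with the bookkeeping of hyperedge overlaps. Unlike the graph case, a single vertex $y\in Y$ joins many pairs of $X$-vertices and can serve as the connecting vertex for several distinct rotations, so the P\'osa argument does not transcribe verbatim; the real work is to show that after all admissible rotations the reachable endpoints and their incident hyperedges are confined to few vertices, to handle the tight boundary $r=k$, and to rule out dense-but-path-free structures such as large sunflowers (which are Berge-path-free yet satisfy the bound with slack). Carrying out this overlap analysis and certifying sharpness against the two different extremal families is where essentially all of the effort lies, and it is precisely because these families differ that the two uniformity regimes must be treated separately.
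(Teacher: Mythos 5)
This statement is quoted background (it is the Gy\H{o}ri--Katona--Lemons theorem, cited from the literature); the present paper contains no proof of it, so your proposal has to stand on its own. It does not: what you have written is a strategy outline whose decisive steps are asserted rather than carried out, and at least one of them cannot be carried out in the form stated. The central claim --- that P\'osa-type rotations at the endpoints of a longest $X$--$X$ path ``trap all the edges of the component on a bounded vertex set, forcing the ratio $\tfrac{k-1}{r+1}$'' --- is exactly the hard part, and you concede as much (``the real work is to show\dots'', ``where essentially all of the effort lies''). Your endpoint observation only controls hyperedges containing a (rotation-reachable) endpoint; hyperedges meeting $P$ solely in internal $X$-vertices are untouched by it, and controlling them is precisely where the graph-case rotation argument fails to transcribe, for the reason you yourself identify (one $Y$-vertex can service many rotations). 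The $k>r+1$ case is not addressed at all beyond ``run the same machinery or a direct charging,'' even though that regime has a different extremal family and, historically, even its boundary case $k=r+1$ required a separate paper.

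There is also a concrete structural flaw showing the sketch, as written, cannot yield the stated bound: nowhere do you use that $\h$ is a \emph{simple} hypergraph, i.e.\ that distinct $Y$-vertices of the incidence graph have distinct neighbourhoods. But the inequality $e(\h)\le \frac{n(k-1)}{r+1}$ is false for multi-hypergraphs: take $n/r$ disjoint groups of $r$ vertices, each carrying the same hyperedge with multiplicity $k-1$. Each component supplies only $k-1$ distinct hyperedges, so there is no Berge path of length $k$, yet $e(\h)=\frac{n(k-1)}{r}>\frac{n(k-1)}{r+1}$. Running your component analysis on this example, every $Y$-vertex does lie on the longest path and no endpoint can be extended, so your trapping conclusion holds --- and gives ratio $\frac{k-1}{r}$, not $\frac{k-1}{r+1}$. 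Hence any correct proof must invoke simplicity (e.g.\ that $k-1\ge 2$ distinct $r$-sets span at least $r+1$ vertices) at the point where the per-component ratio is computed, and your outline has no such step. Until the trapping claim is proved with this ingredient built in, and the $k>r+1$ regime is argued separately, the proposal is a plausible plan but not a proof.
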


The remaining case when $k = r + 1$ was settled later by Davoodi, Gy\H{o}ri, Methuku and Tompkins~\cite{davoodi}, the Tur\'an number matches the upper bound of Theorem~\ref{gkl} in the $k>r+1$ case.

%\begin{theorem}[Davoodi, Gy\H{o}ri, Methuku, Tompkins \cite{davoodi}]

%Fix $k = r+1>2$ and let $\h$ be an $r$-uniform hypergraph containing no Berge path of length $k$.  Then,
%	\begin{displaymath}
%	e(\h) \le \frac{n}{k} \binom{k}{r} = n.
%	\end{displaymath}
%\end{theorem}

We now turn our attention to the case of trees in hypergraphs. The Tur\'an number of certain kinds of trees in $r$-uniform hypergraphs has long been a major topic of research.  For example, there is a notoriously difficult conjecture of Kalai \cite{kalai} which is more general than the Erd\H{o}s-S\'os conjecture.  The trees which Kalai considers are generalizations of the notion of tight paths in hypergraphs.  In another direction, F\"uredi \cite{fur} investigated linear trees, constructed by adding $r-2$ new vertices to every edge in a (graph) tree. In this setting, he proved asymptotic results for all uniformities at least $4$. Whereas, the articles above considered classes of trees containing tight and linear paths, respectively, we will consider the setting of Berge trees.     

In the range when $k > r$, a number of results on forbidding Berge trees were obtained by Gerbner, Methuku and Palmer in~\cite{bigk}. In particular they proved that if we assume the Erd\H{o}s-S\'os conjecture holds for a tree $T$ with $k$ edges and all of its sub-trees and also that  $k>r+1$, we have $\ex_r(n,\B T) \le \frac{n}{k}\binom{k}{r}$ (a construction matching this bound when $k$ divides $n$ is given by $n/k$ disjoint copies of the complete $r$-uniform hypergraph on $k$ vertices). In the present paper, we will consider the range $r>k$, where we prove some exact results.

\section{Main Results}

Considering  multi-hypergraphs, we prove the following.
\begin{theorem}
\label{Multi_tree_theorem}
Let $n,k,r$ be positive integers and let $T$ be a $k$-edge tree, then for all $r\geq (k-1)(k-2)$,
\begin{displaymath}
                      \ex_r^{multi}(n,\B T)\leq \frac{n(k-1)}{r}.
\end{displaymath}

If $r > (k-1)(k-2)$ and $T$ is not a star, equality holds if and only if $r$ divides $n$ and the extremal multi-hypergraph is  $\frac{n}{r}$ disjoint hyperedges, each with multiplicity $k-1$.  If $T$ is a star equality holds only for all  $(k-1)$-regular multi-hypergraphs.
\end{theorem}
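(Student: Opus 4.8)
The plan is to prove the contrapositive of the inequality, to dispose of the star separately, and to concentrate the real work in a single embedding lemma for connected multi-hypergraphs. Throughout I use that $r\geq (k-1)(k-2)$ forces $r\geq k+1$ in the meaningful range, so that every component has more than $k+1$ vertices and the trivial pathology of the Remark does not occur.

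First I would treat the star $T=K_{1,k}$, which is both the easy case and the source of the different extremal behaviour. A Berge copy of $K_{1,k}$ is simply a center $x$, together with $k$ distinct incident hyperedges and $k$ distinct leaf-representatives, one in each. If $\h$ has a vertex $x$ with $d(x)\geq k$, then each of the hyperedges at $x$ contributes at least $r-1\geq k$ candidate leaves, so a greedy selection (equivalently, Hall's condition, which holds because even identical fat hyperedges leave $r-1\geq k$ free vertices) produces $k$ distinct leaves; hence $\h$ contains a Berge star. Consequently a Berge-star-free $\h$ has maximum degree at most $k-1$, and $e(\h)=\frac1r\sum_v d(v)\leq \frac{n(k-1)}{r}$, with equality precisely when every degree equals $k-1$. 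This yields the bound and the ``all $(k-1)$-regular'' characterization in the star case, \emph{with no structural constraint}, which is exactly why the star is exceptional.

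For a non-star $T$ I would reduce to the components of the hyperedge-incidence relation. Since $e(\h)=\sum_{\C}e(\C)$ and $n\geq\sum_{\C}v(\C)$ (isolated vertices only help), it suffices to show that every Berge-$T$-free connected component $\C$ satisfies $e(\C)\leq\frac{(k-1)v(\C)}{r}$, i.e.\ has average degree at most $k-1$. As each component contains a hyperedge, $v(\C)\geq r$; equality in the per-component bound then forces $v(\C)=r$ and $e(\C)=k-1$, that is, a single $r$-set of multiplicity $k-1$, and summing recovers the extremal family of $n/r$ disjoint such ``fat edges'' together with the divisibility condition $r\mid n$. The strict inequality $r>(k-1)(k-2)$ enters only here, to give enough slack in the embedding below to exclude any equality configuration other than these fat edges.

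The crux is therefore the \textbf{Main Lemma}: a connected $r$-uniform multi-hypergraph $\C$ with average degree exceeding $k-1$ (equivalently $r\,e(\C)>(k-1)v(\C)$) and $r\geq (k-1)(k-2)$ contains a Berge copy of every non-star $k$-edge tree $T$. I would prove it by a greedy embedding: root $T$, process its vertices in BFS order, and build up a partial injection of tree-vertices into $V(\C)$ together with a partial bijection of tree-edges onto distinct hyperedges. At each step one extends an already-placed vertex $u$ (with image $x$) by a new child, which requires a hitherto-unused hyperedge through $x$ containing a still-free vertex; the average-degree hypothesis guarantees vertices of degree $\geq k$ and, via connectivity, enough overlapping hyperedges to keep the process alive. \textbf{The main obstacle} is running this embedding for trees with branch vertices: unlike the star, where all incident hyperedges can be absorbed at one high-degree vertex, a general non-star tree forces some placed image to lie in several distinct unused hyperedges while its pendant subtrees still demand fresh vertices, so one must simultaneously maintain injectivity of vertex-images, injectivity of edge-hyperedges, and the incidence constraints. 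Quantifying the worst case — roughly, searching among up to $k-1$ heavily overlapping hyperedges for one avoiding the up to $k-1$ previously placed vertices — is precisely where the quadratic threshold $(k-1)(k-2)$ is spent, and where I expect the bulk of the technical effort to lie.
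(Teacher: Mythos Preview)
Your treatment of the star case is fine and essentially matches what one would do. The non-star case, however, has a genuine gap.

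The heart of your plan is the Main Lemma: a connected $r$-uniform multi-hypergraph with average vertex degree exceeding $k-1$ contains a Berge copy of every non-star $k$-edge tree. The statement is correct, but your proposed proof by ``greedy BFS embedding starting from a vertex of degree $\geq k$'' does not work as written. The average-degree hypothesis gives you \emph{one} vertex of degree $\geq k$; it gives you no control whatsoever over the degrees of the vertices at which the images of the \emph{other} internal vertices of $T$ land. When the BFS reaches a non-root branch vertex of $T$, its image may well sit at a vertex of $\h$ of degree $1$ or $2$, and then you simply cannot continue --- connectivity does not rescue you, and ``searching among up to $k-1$ heavily overlapping hyperedges for one avoiding up to $k-1$ placed vertices'' is not the obstruction at all (that would only require $r\geq k$, not $r\geq (k-1)(k-2)$). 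So both the embedding scheme and your diagnosis of where the quadratic threshold is spent are off. Your equality analysis inherits the same gap: from ``avg degree $>k-1$ forces Berge $T$'' you cannot conclude that a connected component with avg degree \emph{equal} to $k-1$ must be a single $r$-set of multiplicity $k-1$; that needs a separate structural argument which your sketch does not supply.

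The paper takes a quite different route. It works not in $\h$ but in the incidence bipartite graph $G$ between $V(\h)$ and $E(\h)$, and the threshold $r\geq (k-1)(k-2)$ is exactly what makes $d(G)\geq 2(k-2)$; this is where the quadratic bound is actually used. One then passes (via the standard half-average-degree lemma) to a subgraph of $G$ of minimum degree $\geq k-1$, which translates into a \emph{reduced sub-hypergraph} in which every vertex has degree $\geq k-1$ \emph{and} every (shrunken) hyperedge has size $\geq k-1$. With this two-sided minimum-degree condition in hand, a short embedding argument (Lemma~\ref{cluster}) shows that such a structure must contain a $(k-1)$-cluster: $k-1$ hyperedges sharing $\geq k-1$ common vertices. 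Two further claims pin down the degrees of vertices in a cluster (core vertices have degree exactly $k-1$; non-core span vertices have degree at most $\lfloor (k-1)/2\rfloor$), and these let one remove all clusters from $G$ without lowering its average degree. Iterating forces the whole hypergraph to be a union of clusters, which yields both the bound and, after a short count, the extremal description. The key conceptual step you are missing is this passage to the incidence bipartite graph to obtain \emph{simultaneous} minimum-degree and minimum-edge-size control, rather than trying to embed directly from a global average.
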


We conjecture that Theorem \ref{Multi_tree_theorem} holds for the following wider set of parameters.

\begin{conjecture}
\label{treeconj}
Let $n,k,r$ be positive integers and let $T$ be a $k$-edge tree, then for all $r \ge k+1$,
\begin{displaymath}
                      \ex_r^{multi}(n,\B T)\leq \frac{n(k-1)}{r}.
\end{displaymath} 
For all trees $T$, where $T$ is  not a star, equality holds if and only if $r$ divides $n$ and  the extremal multi-hypergraph is $\frac{n}{r}$ disjoint hyperedges each with multiplicity $k-1$.   
\end{conjecture}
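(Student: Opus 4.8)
The plan is to prove the bound by induction on the number of vertices $n$, at each step peeling off a vertex set that meets few hyperedges. The case where $T$ is the $k$-edge star is immediate and does not need the induction: if $\h$ has no Berge copy of the star then no vertex lies in $k$ distinct hyperedges, so every degree is at most $k-1$ and $r\,e(\h)=\sum_{v}\deg(v)\le(k-1)n$; this also explains why the star admits many extremal configurations, namely every $(k-1)$-regular multi-hypergraph. For non-star $T$ the inductive engine is the following statement, which I will call the \textbf{Sparse-set Lemma}: every Berge-$T$-free $r$-uniform multi-hypergraph with $r\ge k+1$ and at least one hyperedge contains a nonempty vertex set $S$ that meets at most $\tfrac{(k-1)\abs{S}}{r}$ hyperedges, counted with multiplicity. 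Granting this, I delete $S$ and all hyperedges meeting it; the remainder is Berge-$T$-free on $n-\abs{S}$ vertices, and induction gives $e(\h)\le\tfrac{(k-1)(n-\abs{S})}{r}+\tfrac{(k-1)\abs{S}}{r}=\tfrac{(k-1)n}{r}$. The base case $n\le r$ is clean: if $n<r$ there are no hyperedges, while if $n=r$ the only possible support is the full vertex set, and its multiplicity is at most $k-1$, since $k$ copies would already furnish $k$ distinct defining hyperedges, each containing all of any $k+1$ chosen vertices (possible as $r\ge k+1$), hence a Berge copy of every $k$-edge tree.

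The Sparse-set Lemma will be proved using a greedy embedding result that already works in the full range, which I will call the \textbf{Embedding Lemma}: if $\h$ is $r$-uniform with $r\ge k+1$ and minimum degree at least $k$, then $\h$ contains a Berge copy of every $k$-edge tree. To see it, order $V(T)=\{t_0,\dots,t_k\}$ so that each $t_i$ with $i\ge1$ has a single neighbour $t_{p(i)}$ among the earlier vertices, and embed $t_0,t_1,\dots$ one at a time. When embedding $t_i$, the image $x$ of $t_{p(i)}$ has degree at least $k$ but lies in at most $i-1\le k-1$ of the hyperedges used so far, so an unused hyperedge $e_i$ at $x$ exists; and since $\abs{e_i}=r\ge k+1$ while at most $i\le k$ defining vertices are already placed, $e_i$ contains a free vertex to host $t_i$. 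Assigning $e_i$ to the tree-edge $t_{p(i)}t_i$ completes the step. The point to stress is that this argument never uses $r>k+1$; the entire large-uniformity difficulty is pushed into the Sparse-set Lemma.

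The real work, and the place I expect the main obstacle, is the Sparse-set Lemma for non-star $T$. Two reductions are routine. First, any connected component of $\h$ (in its intersection graph) carrying at most $k-1$ hyperedges already gives an admissible $S$, namely its vertex set: it has at least $r$ vertices and meets at most $k-1\le\tfrac{(k-1)\abs{S}}{r}$ hyperedges. So I may assume every component has at least $k$ hyperedges. Within such a component the plan is to take a \emph{maximum} Berge copy of a sub-tree $T_j\subseteq T$ obtained from $T$ by repeatedly deleting leaves; maximality, together with the free-vertex count above, forces every vertex at which the next leaf could be attached to have \emph{all} of its (at most $k-1$) hyperedges already used, thereby exposing a cluster of at most $k-1$ hyperedges whose private vertices should constitute $S$. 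The hypothesis on $r$ enters precisely in certifying that this cluster contributes few incidences to the rest of $\h$: one must understand how the at most $k-1$ hyperedges overlap. Theorem~\ref{Multi_tree_theorem} buys this cheaply by spending $r\ge(k-1)(k-2)$, which leaves each relevant hyperedge enough private vertices—avoiding the $O(k^2)$ pairwise intersections and the at most $k$ placed defining vertices—to re-route the tree whenever two of the edges overlap. For $r$ as small as $k+1$ this room vanishes and the edges are forced to overlap heavily, so the rerouting step collapses. The essential new ingredient must be to convert these unavoidable overlaps into structure: either to read a Berge copy of $T$ directly out of a heavily overlapping cluster (contradicting Berge-$T$-freeness), or to show that such a cluster localizes a genuinely sparse set $S$. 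I expect this conversion to be the crux and to require an idea genuinely stronger than the counting available for large $r$.

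Finally, the equality characterization for non-star $T$ should follow by tracking tightness through the induction. Equality $e(\h)=\tfrac{(k-1)n}{r}$ forces equality at every deletion: each peeled piece must meet exactly $\tfrac{(k-1)\abs{S}}{r}$ hyperedges, and the small-component case attains this only when $\abs{S}=r$ and the component has exactly $k-1$ hyperedges, that is, a single $r$-set of multiplicity $k-1$. Thus $\h$ is a disjoint union of such blocks and $r\mid n$. To make the uniqueness rigorous one proves the Sparse-set Lemma in the strict form that a component with at least $k$ hyperedges always contains a set meeting \emph{strictly} fewer than $\tfrac{(k-1)\abs{S}}{r}$ hyperedges, so no such component can survive at equality. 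The non-star hypothesis is indispensable here, since for the star every $(k-1)$-regular multi-hypergraph is extremal, and only the non-star case forces the rigid block structure.
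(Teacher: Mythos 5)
You should first be clear about the status of the statement: it is Conjecture~\ref{treeconj}, which the paper explicitly does \emph{not} prove. The paper establishes the bound only for $r\ge(k-1)(k-2)$ (Theorem~\ref{Multi_tree_theorem}) and cites \cite{new} for the special case of paths; the range $k+1\le r<(k-1)(k-2)$ is open. So there is no proof in the paper to compare against, and any correct argument here would be new mathematics.

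Your proposal is not such an argument: it contains a genuine gap, which to your credit you name yourself. The peripheral pieces are fine --- the star case via the degree bound $d(v)\le k-1$, the Embedding Lemma (greedy embedding under minimum degree $k$, which indeed uses only $r\ge k+1$), the base case $n\le r$, and the peeling induction that converts a ``Sparse-set Lemma'' into the bound $e(\h)\le\frac{n(k-1)}{r}$ together with the equality characterization. But the entire difficulty of the conjecture is concentrated in the Sparse-set Lemma for all $r\ge k+1$, and for that you offer only a plan, conceding that ``the rerouting step collapses'' for $r$ near $k+1$ and that ``an idea genuinely stronger than the counting available for large $r$'' is required. This is precisely where the paper's own machinery (Claims~\ref{core} and~\ref{Y}, Lemma~\ref{cluster}, and the average-degree computation $\frac{r(k-1)}{r+k-1}\ge k-2\Leftrightarrow r\ge(k-2)(k-1)$) also fails below the quadratic threshold, so you have in effect reduced the conjecture to an open lemma rather than proved it. A further concrete flaw in your sketch of that lemma: you assert that a vertex blocking the extension of a maximal embedded subtree has ``all of its (at most $k-1$) hyperedges already used,'' but Berge-$T$-freeness for a non-star $T$ does not bound individual vertex degrees by $k-1$ --- only a Berge star is excluded by high degree at a single vertex, and the paper obtains degree bounds only for special vertices (cores and spans of $(k-1)$-clusters), not generically. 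So even the announced strategy for the crux step starts from an unjustified premise and would need repair before the real obstacle is even reached.
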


The special case of Conjecture \ref{treeconj}, when the forbidden tree is a path, was settled by Gy\H{o}ri, Lemons, Salia and Zamora~\cite{new} (see the first corollary).

We now define a class of hypergraphs which we will need when we classify the extremal examples in our main result about simple hypergraphs, Theorem~\ref{Not_Star_Theorem}.

\begin{definition}
%Let us denote a simple-hypergraph on $r+1$ vertices, and $k-1$ hyperedges by $B_1$, (there is only one such hypergraph up to isomorphism).
%Let $\B_2$ be a family of hypergraphs, which are obtained by the following procedure. Let us define an auxiliary graph $G$. $G$ is a bipartite graph with classes $A$ and $B$, where $2\abs{A} = \abs{B}$, in which the degree of every vertex in the class $B$ is $\frac{k-1}{2}$ and every vertex of $A$ has degree $k-1$. A hypergraph $\h$, $\h \in \B_2$, is obtained from $G$, by taking $r-1$ copies of each vertex from $A$ and $B$ as a vertex set. Replacing two-edges of $G$, $(a,b)$, with the respective hyperedges of size $r$, (all $r-1$ copies of the vertex $a$ and a vertex $b$) as an edge set of $\h$.
An $r$-uniform hypergraph $\h$ is  \emph{two-sided} if $V(\h)$ can be partitioned  into a set $X$ and pairwise disjoint sets $A_i$, $i=1,2,\dots,t$ (also disjoint from $X$) of size $r-1$, such that every hyperedge is of the form $\{x\} \cup A_i$ for some $x \in X$. We say that a two-sided $r$-uniform hypergraph is $(a,b)$-regular if every vertex of $X$ has degree $a$ and every vertex of  $\displaystyle\bigcup_{i=1}^{t}A_i$ has degree $b$.
\end{definition}

\begin{remark} A two-sided $r$-uniform hypergraph can also be viewed as a graph obtained by taking a bipartite graph $G$ with bipartite classes $X$ and $Y$, and  ``blowing up" each vertex of $Y$ to a set of size $r-1$, and replacing each edge $\{x,y\}$ by the $r$-hyperedge containing $x$ together with the blown up set for $y$.

\end{remark}

\begin{theorem}
\label{Not_Star_Theorem}
Let $n,k,r$ be positive integers and let $T$ be a $k$-edge tree which is not a star, then for all $r\geq k(k-2)$, 

\begin{displaymath}
                      \ex_r(n,\B T)\leq \frac{n(k-1)}{r+1}.
\end{displaymath}
Equality holds if and only if $r+1$ divides $n$, and the extremal hypergraph is obtained from $\frac{n}{r+1}$ disjoint sets of size $r+1$, each containing $k-1$ hyperedges. 
Unless $k$ is odd, and $T$ is the balanced double star, where the balanced double star is the tree obtain from and edge by adding $\frac{k-1}{2}$ incident edges to each of the ends of the edge, in which case equality holds if and only if $r+1$ divides $n$ and $\h$ is obtained from the disjoint union of sets of size $r+1$ containing $k-1$ hyperedges each and possibly a $(k-1,\frac{k-1}{2})$-regular two-sided $r$-uniform hypergraph (see Figure~\ref{Extremal}).
\end{theorem}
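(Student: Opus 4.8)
The plan is to pass to connected components and reduce everything to a single density inequality. Writing $\C_1,\dots,\C_c$ for the components of $\h$, each $\C_i$ is itself $\B T$-free, and $e(\h)=\sum_i e(\C_i)$, $n=\sum_i v(\C_i)$. Hence it suffices to prove the per-component estimate
\[
\frac{e(\C)}{v(\C)}\le \frac{k-1}{r+1}
\]
for every connected $\B T$-free hypergraph $\C$; summing over the components then yields $e(\h)\le \frac{n(k-1)}{r+1}$ and forces, in the case of equality, every component to meet its own bound with equality. I would first dispose of the sparse components by hand: a component with a single hyperedge has $v(\C)=r$, two distinct $r$-subsets already require $r+1$ vertices, and a short computation using $k\ge 3$ and $r\ge k(k-2)$ shows that any component with at most $k-1$ hyperedges satisfies the estimate, with equality if and only if $\C$ is a \emph{block}, that is an $(r+1)$-set carrying exactly $k-1$ hyperedges.

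The substance of the theorem is therefore the claim that a connected $\B T$-free hypergraph cannot be denser than a block, and this is where the hypotheses $r\ge k(k-2)$ and ``$T$ is not a star'' enter. I would prove the contrapositive by a greedy embedding: assuming $\C$ is too dense, I locate the structure needed to build a Berge copy of $T$ and embed $T$ vertex by vertex in a breadth-first order $v_0,v_1,\dots,v_k$, extending at each step from the already-placed image of the parent along an unused hyperedge and choosing an unused vertex inside it as the next image. Two features make this succeed: since at most $k$ vertices and $k-1$ hyperedges are ever used, the bound $r\ge k(k-2)$ leaves every relevant hyperedge with unused vertices and, crucially, keeps the pairwise intersections of the chosen hyperedges large enough to seat the high-degree vertices of $T$ simultaneously; and since $T$ is not a star its maximum degree is at most $k-1$, so no single image vertex must lie in $k$ distinct hyperedges and the branching of $T$ can be distributed across the dense region. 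Guiding the greedy choices by the degrees of the host—placing a tree-vertex of degree $d$ at a hypergraph-vertex lying in at least $d$ still-available hyperedges—is the delicate part, and matching the sharp threshold $k(k-2)$ to this process is the principal technical point of the upper bound.

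It remains to read off the extremal configurations. For every tree other than the balanced double star, equality forces each component to attain its bound with equality and hence to be a block, giving $\frac{n}{r+1}$ disjoint blocks and in particular $r+1\mid n$. The balanced double star with $k$ odd is genuinely exceptional, and this is the reason for the ``unless'' clause: its two centres have degree $\frac{k+1}{2}$, so a Berge copy demands two host vertices of hyperedge-degree at least $\frac{k+1}{2}$ lying in a common hyperedge, and this requirement is exactly what the two-sided hypergraphs defeat, since there the vertices of $X$ may have large degree but no two of them share a hyperedge while every blown-up set $A_i$ has degree only $\frac{k-1}{2}$. For this tree I would carry out a separate structural analysis of the connected $\B T$-free hypergraphs on which the greedy argument fails, show that they are precisely the $(k-1,\frac{k-1}{2})$-regular two-sided hypergraphs, and determine which combinations of these together with blocks realise equality, as recorded in the statement. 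Establishing this classification—pinning down the dense $\B T$-free components in the double-star case and ruling out every competing configuration—is the step I expect to be by far the most delicate.
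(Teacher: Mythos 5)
Your reduction to connected components and your handling of components with at most $k-1$ hyperedges are fine, but they are the trivial part: summing a per-component density bound is equivalent to the theorem itself, so nothing has been gained until the dense case is settled. That case is precisely where your proposal stops being a proof. The step ``assuming $\C$ is too dense, I locate the structure needed to build a Berge copy of $T$ and embed $T$ vertex by vertex'' is the entire theorem, and the mechanism you sketch --- a breadth-first greedy embedding that places a tree-vertex of degree $d$ at a host vertex lying in $d$ still-available hyperedges --- cannot work as stated. Density just above $\frac{k-1}{r+1}$ only gives average hypergraph degree about $k-1$, and even after cleaning to a substructure of minimum degree $k-1$ (via the standard ``min degree $> d/2$'' lemma applied to the incidence bipartite graph, which is what the paper does), a greedy embedding must fail: $k-1$ hyperedges on $r+1$ vertices (a ``block,'' i.e.\ a $(k-1)$-cluster) have exactly this min-degree property and contain no Berge copy of $T$, since only $k-1$ hyperedges are available for the $k$ edges of $T$. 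So the correct statement is not ``dense $\Rightarrow$ embed $T$'' but a dichotomy: a reduced sub-hypergraph with min degree $\ge k-1$ and edge sizes $\ge k-1$ contains either a Berge copy of $T$ or a $(k-1)$-cluster (the paper's Lemma~\ref{cluster}). One must then show that clusters are nearly isolated --- their core vertices have degree exactly $k-1$ (Claim~\ref{core}) and any span vertex touching an outside hyperedge has degree at most $\lfloor\frac{k-1}{2}\rfloor$ (Claim~\ref{Y}) --- delete all clusters, verify by a careful count that the deletion does not decrease the incidence-graph density, and rerun the dichotomy to get a contradiction. None of this cluster machinery, which is the actual content of the proof and the reason the threshold $r \ge k(k-2)$ appears, is present in your plan; you explicitly defer it as ``the principal technical point.''

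The same applies to the equality analysis: ``carry out a separate structural analysis of the connected $\B T$-free hypergraphs on which the greedy argument fails'' defers exactly the classification that has to be proved, and in the paper it falls out of tracking when every inequality in the cluster-deletion count is tight (together with an induction on $n$ that removes $(r+1)$-sets meeting at most $k-1$ hyperedges). One concrete warning on your description of the exceptional example: you say the two-sided obstruction has $X$-vertices of large degree and blown-up sets $A_i$ of degree $\frac{k-1}{2}$. A two-sided hypergraph with those degrees has density $\frac{k-1}{2r-1}<\frac{k-1}{r+1}$ and so could never appear in an extremal configuration; the configuration that actually achieves equality (as the paper's own equality analysis shows, its figure's labels notwithstanding) has the $(r-1)$-sets of degree $k-1$, the single vertices of degree $\frac{k-1}{2}$, and twice as many single vertices as $(r-1)$-sets. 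Both orientations are $\B T$-free for the balanced double star, but only the latter meets the bound, and pinning that down is part of the work you have skipped.
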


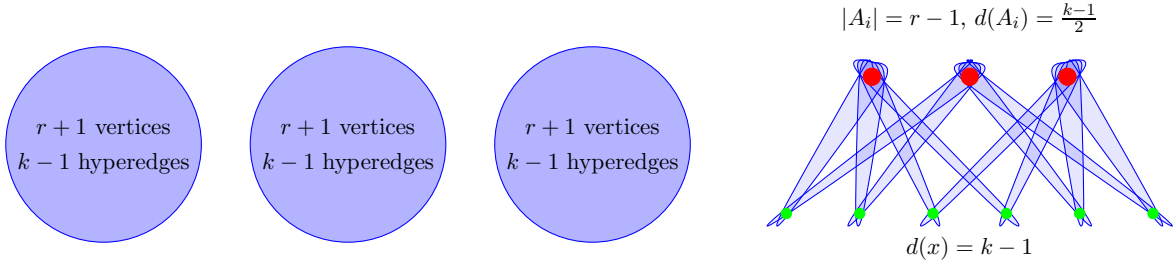
\begin{figure}[t]
    \centering
   % \begin{small}
\begin{tikzpicture}[scale = 0.65,every node/.style={scale=.8}]
\draw (0,3);

\filldraw[blue,fill opacity=0.3] (2,0)  arc (0:360: 2cm and 2cm);
\draw (0,0) node[above]{$r+1$ vertices};
\draw (0,0) node[below] {$k-1$ hyperedges};

%\draw (0,2) (0,-3.5);

\begin{scope}[xshift=5cm]
\filldraw[blue,fill opacity=0.3] (2,0) arc (0:360: 2cm and 2cm);
\draw (0,0) node[above]{$r+1$ vertices};
\draw (0,0) node[below] {$k-1$ hyperedges};
\end{scope}

\begin{scope}[xshift=10cm]
\filldraw[blue,fill opacity=0.3] (2,0) arc (0:360: 2cm and 2cm);
\draw (0,0) node[above]{$r+1$ vertices};
\draw (0,0) node[below] {$k-1$ hyperedges};

\end{scope}
\draw(0,-2.5);
\end{tikzpicture} \qquad
\begin{tikzpicture}[rotate=90,scale = 0.65,every node/.style={scale=.8}]
%\draw (-2,0);

\filldraw[blue, fill opacity=0.10] plot [smooth cycle] coordinates {  (0.02,3.68) (.04,3.83)  (2.85,2.25) (3.1,2) (2.70,1.75) } ;

\filldraw[blue, fill opacity=0.10] plot [smooth cycle] coordinates {  (0.02,2.18) (.04,2.33)  (2.85,2.25) (3.1,2) (2.70,1.75) } ;

\filldraw[blue, fill opacity=0.10] plot [smooth cycle] coordinates {  (0.02,.68) (.04,.83)  (2.85,2.25) (3.1,2) (2.70,1.75) } ;

\filldraw[blue, fill opacity=0.10] plot [smooth cycle] coordinates {  (0.02,-0.82) (.04,-0.6)  (2.85,2.25) (3.1,2) (2.70,1.75) } ;

%%%%%5
\filldraw[blue, fill opacity=0.10] plot [smooth cycle] coordinates {  (0.02,-3.68) (.04,-3.83)  (2.85,-2.25) (3.1,-2) (2.70,-1.75) } ;

\filldraw[blue, fill opacity=0.10] plot [smooth cycle] coordinates {  (0.02,-2.18) (.04,-2.33)  (2.85,-2.25) (3.1,-2) (2.70,-1.75) } ;

\filldraw[blue, fill opacity=0.10] plot [smooth cycle] coordinates {  (0.02,-.68) (.04,-.83)  (2.85,-2.25) (3.1,-2) (2.70,-1.75) } ;

\filldraw[blue, fill opacity=0.10] plot [smooth cycle] coordinates {  (0.02,0.82) (.04,0.6)  (2.85,-2.25) (3.1,-2) (2.70,-1.75) } ;
%%%%%%

\filldraw[blue, fill opacity=0.10] plot [smooth cycle] coordinates {  (0.02,3.68) (.04,3.83)  (2.85,.25) (3.1,0) (2.70,-.1) } ;

\filldraw[blue, fill opacity=0.10] plot [smooth cycle] coordinates {  (0.02,2.18) (.04,2.33)  (2.85,.25) (3.1,0) (2.70,-.1) } ;

\filldraw[blue, fill opacity=0.10] plot [smooth cycle] coordinates {  (0.02,-3.68) (.04,-3.83)  (2.85,-.25)  (3.1,0) (2.70,.1)  } ;

\filldraw[blue, fill opacity=0.10] plot [smooth cycle] coordinates {  (0.02,-2.18) (.04,-2.33)  (2.85,-.25)  (3.1,0) (2.70,.1) } ;

\filldraw[red] (2.8,0)  circle (5pt) (2.8,2)  circle (5pt) (2.8,-2)  circle (5pt);

\filldraw[green] (0,3.75)circle (3pt) (0,2.25)circle (3pt) (0,.75)circle (3pt) (0,-3.75)circle (3pt) (0,-2.25)circle (3pt) (0,-.75)circle (3pt);

%\draw (-1,0) node[above]{$d(v)=$} (-1,0) node[below]{$k-1$} (4,-.75) node[above]{$d(A) =$ } (4,-.75) node[below]{$\frac{k-1}{2}$} ;

\draw  (4,0) node{$\abs{A_i} =r-1$, $d(A_i) =\frac{k-1}{2} $};
\draw  (-.3,0) node[below]{$d(x)=k-1$};
%\draw (3.75,0) (-3.75,0);
\end{tikzpicture}

\caption{An extremal graph for Theorem~\ref{Not_Star_Theorem} is pictured. Any such graph can be obtained from disjoint copies of a sets of $r+1$ vertices with $k-1$ hyperedges and if $T$ is the balanced double star, possibly a $(k-1,\frac{k-1}{2})$-regular two-sided $r$-uniform hypergraph.}
\label{Extremal}
\end{figure}
\section{Proofs of the main results}

We start with some results about graphs.

\begin{definition} For a Graph $G$, we denote by $d(G)$ the average degree of $G$, that is $d(G) = \frac{2e(G)}{v(G)}.$
\end{definition}

\begin{lemma}\label{folklore} Any non-empty graph $G$ contains a subgraph $G'$ with minimum degree greater than $d(G)/2$.
\end{lemma}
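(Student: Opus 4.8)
The plan is to prove this by a greedy vertex-deletion argument, tracking the average degree carefully so as to guarantee that the resulting subgraph is non-empty. Write $d := d(G) = 2e(G)/v(G)$, and note that since $G$ is non-empty it has at least one edge, so $d>0$. Starting from $G$, I would repeatedly delete any vertex whose degree in the \emph{current} graph is at most $d/2$, halting only when no such vertex remains.

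The heart of the argument is a monotonicity observation: deleting a vertex of degree at most the current $e/n$ never decreases the average degree. Concretely, if the current graph has $n$ vertices and $e$ edges and we delete a vertex $v$ of degree $\deg(v)$, the new average degree is $\frac{2(e-\deg(v))}{n-1}$, and a one-line rearrangement gives
\[
\frac{2(e-\deg(v))}{n-1} \ge \frac{2e}{n} \iff \deg(v) \le \frac{e}{n}.
\]
I would then maintain the invariant that the current average degree stays $\ge d$. This holds initially (with equality), and for the inductive step: if the current average is $\ge d$ then $e/n \ge d/2$, so a deleted vertex with $\deg(v)\le d/2 \le e/n$ satisfies the hypothesis above, and the new average is at least the current one, hence again $\ge d$.

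This invariant is precisely what prevents the process from deleting all the vertices: every graph arising in the process has average degree $\ge d > 0$, and therefore has at least one edge and at least two vertices. In particular the process never produces a single isolated vertex (average degree $0$) nor the empty graph, so it must terminate at a non-empty graph $G'$. By the stopping rule, $G'$ has no vertex of degree $\le d/2$, that is $\delta(G') > d/2 = d(G)/2$, as required. The one point demanding care is exactly this non-vanishing claim; once the average-degree invariant is in place it is immediate, and the strictness of the bound $\delta(G') > d(G)/2$ comes for free, since we delete precisely the vertices of degree $\le d(G)/2$.
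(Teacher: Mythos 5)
Your proof is correct and follows essentially the same route as the paper: the paper sketches the argument via its Lemma~\ref{average}, and your single-vertex monotonicity step is exactly that lemma specialized to $\abs{V'}=1$, applied iteratively with the invariant that the average degree never drops below $d(G)$. Your writeup additionally makes explicit the termination/non-emptiness point that the paper leaves implicit, which is the right detail to pin down.
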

The previous lemma is a well-known result in graph theory, which can be proved using the following lemma.

\begin{lemma}\label{average}
Let $G$ be a graph and $V' \subseteq V$, if $V'$ is incident with at most $\frac{d(G)}{2}\abs{V'}$ edges, then $d(G[V\setminus V']) \geq d(G)$.
\end{lemma}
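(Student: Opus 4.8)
The plan is to reduce the claimed inequality directly to the hypothesis by a single cross-multiplication, since every relevant quantity can be written down explicitly. Write $n = v(G)$, $m = e(G)$ and $s = \abs{V'}$, so that $d(G) = 2m/n$, and let $E'$ denote the set of edges of $G$ incident with at least one vertex of $V'$; the hypothesis then reads $\abs{E'} \le \frac{m}{n}s$. The key structural observation is that deleting the vertices of $V'$ removes exactly the edges of $E'$ and nothing else, so the induced subgraph $H = G[V \setminus V']$ satisfies $v(H) = n - s$ and $e(H) = m - \abs{E'}$.

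With this in hand, assuming $V \setminus V'$ is nonempty so that $n - s > 0$, the target inequality $d(H) \ge d(G)$ becomes
\[
\frac{2(m - \abs{E'})}{n - s} \ge \frac{2m}{n}.
\]
Since both denominators are positive, I would clear them to obtain the equivalent statement $n(m - \abs{E'}) \ge m(n - s)$, which simplifies to $\abs{E'} \le \frac{m}{n}s = \frac{d(G)}{2}\abs{V'}$ --- precisely the hypothesis. Reversing this chain of equivalences yields the conclusion. Conceptually, one can also phrase this via the mediant: writing $\frac{m}{n} = \frac{(m-\abs{E'}) + \abs{E'}}{(n-s) + s}$ exhibits $d(G)/2$ as a value lying between $\frac{m - \abs{E'}}{n-s}$ and $\frac{\abs{E'}}{s}$, so the hypothesis $\frac{\abs{E'}}{s} \le \frac{m}{n}$ forces $\frac{m - \abs{E'}}{n - s} \ge \frac{m}{n}$, i.e. $d(H) \ge d(G)$.

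The only point requiring care is the degenerate case $V' = V$, where $H$ is empty and $d(H)$ is undefined; in every intended application (in particular when deriving Lemma~\ref{folklore}) one removes a proper subset, so I would simply state the standing assumption that $V \setminus V'$ is nonempty. Beyond this bookkeeping there is no genuine obstacle: the entire content of the lemma is the identity $e(H) = m - \abs{E'}$ combined with the elementary fact that subtracting from the numerator and denominator of $m/n$ at a rate no larger than $m/n$ cannot decrease the ratio.
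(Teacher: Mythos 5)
Your proof is correct and is essentially the same as the paper's: both rest on the identity $e(G[V\setminus V']) = e(G) - \abs{E'}$ and a one-line algebraic comparison with the hypothesis, the paper writing it additively as $2e(G)-2\abs{E'} \ge d(G)v(G) - d(G)\abs{V'} = d(G)\,v(G[V\setminus V'])$ while you clear denominators, which is the same computation. Your explicit handling of the degenerate case $V'=V$ is a minor tidiness point the paper leaves implicit.
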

\begin{proof}
Note that if $m$ is the number of edges incident with $V'$, then we have that \begin{displaymath} 2e(G[V\setminus V']) = 2e(G) - 2m \geq d(G)v(G) - d(G)\abs{V'} = d(G)(\abs{V}-\abs{V'}) = d(G)v(G[V\setminus V']). \qedhere \end{displaymath} \end{proof}

%Change---------------------------------------------------
We are going to use the following fact about trees, before proving the next bound on the degrees of the vertices in clusters.

\begin{claim} \label{lowdegree} If $T$ is $k$-edge tree which is not a star, then there exists a vertex of $T$ which is not a leaf and has degree at most $\frac{k+1}{2}$.
\end{claim}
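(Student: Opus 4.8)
The plan is to locate the desired low-degree internal vertex by exhibiting two \emph{adjacent} non-leaf vertices and bounding the sum of their degrees. First I would record the basic count: since $T$ has $k$ edges it has $k+1$ vertices and degree sum $2k$. The hypothesis that $T$ is not a star should be converted into a statement about internal vertices: a tree in which exactly one vertex has degree at least two is precisely a star, so ``not a star'' forces $T$ to have at least two non-leaf vertices.

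The key structural step is that the non-leaf vertices of a tree induce a connected subtree. Indeed, if $u$ and $w$ are both internal, every interior vertex of the unique $u$--$w$ path in $T$ has degree at least $2$, hence is itself internal; thus the internal vertices are closed under taking paths and so span a subtree. Having at least two internal vertices, this subtree contains an edge, which gives two \emph{adjacent} non-leaf vertices $u$ and $w$. This is the step I expect to be the main (though mild) obstacle, since everything afterward is a routine edge count and one must be careful that the argument uses only ``at least two internal vertices'' rather than any specific shape of $T$.

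With $u$ and $w$ adjacent, I would delete the edge $uw$ to split $T$ into two subtrees $T_u \ni u$ and $T_w \ni w$, with $e(T_u) + e(T_w) = k-1$. Inside $T_u$ the vertex $u$ retains $\deg(u)-1$ incident edges, so $e(T_u) \ge \deg(u)-1$, and symmetrically $e(T_w) \ge \deg(w)-1$. Combining these,
\[
k-1 = e(T_u) + e(T_w) \ge \bigl(\deg(u)-1\bigr) + \bigl(\deg(w)-1\bigr),
\]
so that $\deg(u) + \deg(w) \le k+1$. Consequently $\min\{\deg(u),\deg(w)\} \le \tfrac{k+1}{2}$, and the minimizing vertex is a non-leaf vertex with degree at most $\tfrac{k+1}{2}$, as required. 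As a sanity check, equality $\deg(u)+\deg(w)=k+1$ occurs for double stars, and the balanced double star attains $\deg(u)=\deg(w)=\tfrac{k+1}{2}$, matching the extremal tree singled out in Theorem~\ref{Not_Star_Theorem}.
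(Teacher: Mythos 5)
Your proof is correct, but it takes a slightly different route from the paper and does more work than necessary. The paper prunes all leaves of $T$ to obtain a tree $T'$ with at least two vertices, takes two leaves $v,w$ of $T'$ (which are non-leaves of $T$), and observes that since at most one of the $k$ edges of $T$ is incident to both, $d_T(v)+d_T(w)\le k+1$. Note that this counting needs no adjacency at all: for \emph{any} two distinct vertices of a graph with $k$ edges, the incident edge sets overlap in at most one edge, so any two non-leaf vertices would suffice. Your argument instead insists on producing two \emph{adjacent} internal vertices (via the observation that the internal vertices span a subtree) so that you can delete the edge $uw$ and count edges in the two resulting components; this is valid, but the adjacency and the splitting are dispensable, so the step you flagged as the main obstacle can be skipped entirely. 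There is also something the paper's specific choice buys that yours does not: a leaf of the pruned tree $T'$ has the additional property that all but one of its neighbors in $T$ are leaves, and the paper silently relies on this stronger conclusion when it invokes Claim~\ref{lowdegree} in the proof of Claim~\ref{Y}. Your adjacent internal vertices need not have this property (for instance, two middle vertices of a long path have no leaf neighbors at all), so your proof, while a complete proof of the claim as stated, could not be substituted into the paper without adjusting that later application.
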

\begin{proof}
Let $T'$ be the tree obtained by $T$ by removing every leaf of $T$, since $T$ is not a star, $T'$ has at least two vertices, take any $v,w$ which are leaves in $T'$, and note that for each, every neighbor but one is a leaf, and also, since at most one the $k$ edges of $T$ is incident with both $u$ and $v$,  we have that $d_T(u)+d_T(v)\leq k+1$.
And so, one of these vertices have the desired properties.
\end{proof}
%Oz: Is this the best place?
%-----------------------------------------------------------

Now we introduce two more definitions which we will need in the proofs.

\begin{definition}
Let $\h$ be a (multi-)hypergraph. A $(k-1)$-\emph{cluster} is a set of $k-1$ hyperedges of $\h$ that intersect in at least $k-1$ vertices. The intersection of the $k-1$ hyperedges is called the \emph{core} of the $(k-1)$-cluster. The union of the $k-1$ hyperedges is called the \emph{span} of the $(k-1)$-cluster.
\end{definition}

\begin{definition} Let $\h = (V,E)$ be a multi-hypergraph. A multi-hypergraph $\h' = (V',E')$ is called a
\emph{reduced sub-hypergraph} of $\h$ if $V' \subseteq V$ 
and there exists an injection $f:E' \to E$ such that $h \subseteq f(h)$ for all $h\in E'$.
For an edge $h\in E'$ we call $f(h)\in E$ its \emph{correspondent} edge in $\h$.
\end{definition}

In the following claims, we bound the degrees of the vertices in a $(k-1)$-cluster for a hypergraph which does not contain a copy of a Berge tree.

\begin{claim}\label{core}
Let $n,k,r$ be positive integers, with $r \ge k+1$, and let $T$ be a $k$-edge tree. If $\h$ is an $r$-uniform  multi-hypergraph containing no Berge copy of $T$ and $S$ is a $(k-1)$-cluster in $\h$, then the vertices in the core of $S$ have degree exactly $k-1$. In particular, the core vertices of $S$ are only incident with the hyperedges of $S$.
\end{claim}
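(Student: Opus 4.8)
The plan is to prove the statement by first disposing of the trivial lower bound and then ruling out large degrees. Every core vertex lies, by definition of the core, in all $k-1$ (distinct) hyperedges of $S$, so it has degree at least $k-1$; it therefore suffices to rule out degree $k$ or more. First I would suppose some core vertex $v$ has degree at least $k$. Then $v$ is incident with a hyperedge $h\notin S$, and together with the $k-1$ hyperedges $e_1,\dots,e_{k-1}$ of $S$ we obtain $k$ distinct hyperedges, all of size $r\ge k+1$. The whole argument then reduces to showing that these $k$ hyperedges already host a Berge copy of $T$, contradicting the hypothesis; the degree bound and the ``only incident with $S$'' statement follow at once, since the $k-1$ incidences forced by the core then account for the entire degree.

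To build the Berge copy I would exploit that the core $C$ satisfies $\abs{C}\ge k-1$ and is contained in every $e_i$, whereas $h$ is only guaranteed to contain the single core vertex $v$. I record three facts about a $k$-edge tree $T$ with $k\ge 2$: every edge has at least one internal (non-leaf) endpoint; the number of internal vertices is at most $k-1$, since $T$ has at least two leaves; and there is an internal vertex $w^{*}$ adjacent to a leaf $u$ (take the neighbor of an endpoint of a longest path). The key step is to embed all internal vertices of $T$ injectively into $C$ — possible because there are at most $k-1\le\abs{C}$ of them — choosing in particular $w^{*}\mapsto v$. Since every $e_i$ contains $C$, each internal vertex is then automatically compatible with every $e_i$, which frees me to assign $e_1,\dots,e_{k-1}$ to edges bijectively without worrying about the internal endpoints.

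It remains to route the one problematic hyperedge $h$ and to place the leaves. I would assign $h$ to the edge $\{w^{*},u\}$: its endpoint image $w^{*}\mapsto v$ lies in $h$, and I place $u$ at any vertex of $h\setminus\{v\}$. The remaining $k-1$ edges receive $e_1,\dots,e_{k-1}$ in any bijection, and for each such edge carrying a leaf I place that leaf inside its assigned hyperedge. All placements are performed greedily, always choosing a vertex of the relevant hyperedge not yet used by the embedding; since at most $k$ of the $k+1$ total images have been placed before any given step and each hyperedge has $r\ge k+1>k$ vertices, a fresh vertex is always available, so the vertex map stays injective. This yields an injection $V(T)\to V(\h)$ together with a bijection from $E(T)$ onto $\{e_1,\dots,e_{k-1},h\}$ realizing a Berge copy of $T$, the desired contradiction.

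The main obstacle is precisely the asymmetry of $h$: unlike the cluster edges it need not contain the core, so a careless embedding could be forced to place an endpoint of the $h$-edge outside $h$. I expect the crux of the write-up to be justifying that one can always route a \emph{leaf}-edge through $h$ (so that only the endpoint sitting at $v$, which $h$ does contain, is constrained) while simultaneously fitting all internal vertices into the core and keeping the global vertex map injective. The inequalities $\abs{C}\ge k-1\ge\abs{I}$ and $r\ge k+1$ are exactly what make these three requirements mutually compatible.
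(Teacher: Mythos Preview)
Your argument is correct and follows essentially the same approach as the paper: both proofs embed a ``backbone'' of $T$ (the paper uses $T$ minus two leaves, you use the set of internal vertices) into the core using the cluster hyperedges, route the extra hyperedge through $v$ via a leaf-edge, and place the remaining leaves greedily using $r\ge k+1$. The only difference is bookkeeping---the paper strips off exactly two leaves while you strip off all of them---but the underlying idea is identical.
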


\begin{proof}
Let $C$ be the set of vertices in the core of $S$.
Suppose, by contradiction, there is a vertex $v$ in $C$ with degree at least $k$, and let $T'$ be a tree obtained from $T$ by removing any two leaves $x,y$. Suppose that the neighbors of these leaves are $x'$ and $y'$ respectively (it is possible that  $x'=y'$).  
Since $C$ has at least $k-1$ vertices and there are $k-1$ hyperedges containing all the vertices in $C$, we can greedily embed $T'$ in $C$ in such a way that $v$ takes the role of $x'$. 
Suppose the vertex $u$ takes the role of $y'$ in this greedy embedding. We can complete the embedding of $T$ by using the last hyperedge of S and an unused vertex in it (one exists since $r\geq k+1$) to embed $y$. 
Then since the degree of $v$ is at least $k$, we have a hyperedge available to embed $x$ as a unused vertex of this hyperedge. %it contains unused vertex since $r\geq k+1$, 
Thus we have found a Berge copy of $T$ in $\h$, a contradiction.
\end{proof}

\begin{claim}\label{Y}
Let $n,k,r$ be positive integers, with $r \ge k+1$, and let $T$ be a $k$-edge tree which is not a star. If $\h$ is an $r$-uniform multi-hypergraph containing no Berge copy of $T$ and $S$ is a $(k-1)$-cluster of $\h$, then any vertex in the span of $S$ that is incident with a hyperedge not from $S$, has degree at most $\floor{\frac{k-1}{2}}$.
\end{claim}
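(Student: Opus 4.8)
The plan is to argue by contradiction: suppose some vertex $v$ in the span of $S$ is incident with a hyperedge not in $S$, yet has $d(v) \ge \floor{\frac{k-1}{2}}+1$, and derive a Berge copy of $T$. First I would invoke the proof of Claim~\ref{lowdegree} to choose a non-leaf vertex $w$ of $T$ that is a leaf of the tree $T'$ obtained from $T$ by deleting all its leaves and that satisfies $d_T(w) \le \frac{k+1}{2}$; being a leaf of $T'$ guarantees that all neighbors of $w$ except one are leaves of $T$. Write $d = d_T(w) \ge 2$, let $w_1$ be the unique non-leaf neighbor, let $T_1$ be the component of $T - w$ containing $w_1$ (so $T_1$ has $k-d$ edges and $k-d+1 \le k-1$ vertices), and let $w_2,\dots,w_d$ be the leaf-neighbors. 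Since $d$ is an integer with $d\le\frac{k+1}{2}$, we get $d \le \floor{\frac{k+1}{2}} = \floor{\frac{k-1}{2}}+1 \le d(v)$, so $d(v)\ge d$. Finally, by Claim~\ref{core} the vertex $v$ does not lie in the core $C$ of $S$ (core vertices meet only hyperedges of $S$, whereas $v$ has an external one), so $v$ may play a tree-vertex distinct from all vertices used inside $C$.

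Next I would build the embedding with $w \mapsto v$. Because $\abs{C} \ge k-1 \ge \abs{V(T_1)}$ and every hyperedge of $S$ contains all of $C$, the subtree $T_1$ embeds greedily into the core, sending its vertices to distinct core vertices and its $k-d$ edges to distinct hyperedges of $S$. The edge $w w_1$ must be routed through a hyperedge containing both $v$ and the core-image of $w_1$, hence through a hyperedge of $S$ that contains $v$; such a hyperedge exists since $v$ lies in the span. Each remaining edge $w w_i$ to a leaf can be routed either through another hyperedge of $S$ containing $v$, placing $w_i$ at an unused core vertex, or through an external hyperedge at $v$, placing $w_i$ at an unused vertex of that hyperedge (available since $r \ge k+1$). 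Once these routings are shown to coexist, the images of $w,w_1,\dots,w_d$ together with $T_1$ form a Berge copy of $T$, a contradiction.

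The crux is the counting that makes these choices simultaneously feasible. Let $s$ be the number of hyperedges of $S$ through $v$, so $v$ has $d(v)-s \ge 1$ external hyperedges. The $d$ edges at $w$ need $d$ distinct hyperedges through $v$; I plan to realize $x$ of them by hyperedges of $S$ and $d-x$ by external ones, while $T_1$ consumes a further $k-d$ hyperedges of $S$. Keeping all $k$ hyperedges distinct forces $(k-d)+x \le k-1$, i.e.\ $x \le d-1$, while routing $w w_1$ forces $x \ge 1$; the conditions $d(v)\ge d$ and $d(v) > s$ (an external edge exists) are exactly what make this admissible range for $x$ nonempty. One then checks the vertex budget: after $T_1$ the core retains at least $d-2$ free vertices, which accommodate the at most $x-1 \le d-2$ leaves routed through $S$, while the remaining leaves are absorbed by external hyperedges. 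I expect the main obstacle to be precisely this simultaneous bookkeeping across three pools of hyperedges — those of $S$ through $v$, those of $S$ avoiding $v$, and the external ones at $v$ — under the tight supply of free core vertices; the structural ingredient that makes it balance is the choice of $w$ as a leaf of $T'$, leaving only the single subtree $T_1$ to be placed inside the small core while every other neighbor of $w$ is handled by an external hyperedge.
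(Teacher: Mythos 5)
Your proof is correct and follows essentially the same route as the paper's: both select the low-degree non-leaf vertex guaranteed by Claim~\ref{lowdegree}, map it to $v$ (which lies outside the core by Claim~\ref{core}), route its incident edges through a mixture of $S$-hyperedges and external hyperedges at $v$ — needing at least one of each, which is exactly what $d(v)\ge\floor{\frac{k+1}{2}}$, the span condition, and the existence of an external hyperedge provide — and embed the remaining subtree greedily in the core using the unused hyperedges of $S$. The paper's bookkeeping is slightly leaner (it fixes exactly one external hyperedge and lets the other $s-1$ hyperedges at $v$ be arbitrary, rather than optimizing over your parameter $x$), but the decomposition, the key lemmas invoked, and the counting are the same.
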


\begin{proof}
Since $T$ is not a star, by Claim~\ref{lowdegree}, there is a vertex $x \in V(T)$ which is not a leaf and has degree $s$, $s\leq \floor{\frac{k+1}{2}}$, such that all but one of its neighbors is a leaf, let $y$ be the neighbor of $x$ which is not a leaf.  
Suppose, by contradiction, there is a vertex $v$ in the span of $S$ which is incident with a hyperedge that is not in $S$ and $v$ has degree at least $\floor{\frac{k+1}{2}}$. 
Let $C$ be the set of vertices in the core of $S$.
From Claim~\ref{core} we know that $v$ cannot be in $C$.
Pick $s$ hyperedges $h_1,h_2,\dots,h_s$ incident to $v$ in such a way that $h_1$ is not in $S$ and $h_2$ is in $S$. 
Choose a vertex $w \in h_1$ not in $C$ (in fact, every vertex in $h_1$ is outside $C$ by Claim~\ref{core}) and $u \in h_2$ in $C$.  
Choose further distinct vertices $v_3,v_4,\dots,v_s$ from the hyperedges $h_3,h_4,\dots,h_s$.  
The vertex $v$ will be assigned to the vertex  $x$ in the tree, and the vertex $u$ will be assigned to the vertex $y$ ($v_3,v_4,\dots,v_s$ will be assigned to the leaves adjacent to $x$).  
Thus, using the hyperedges $h_1,h_2,\dots,h_s$ we can embed the vertex $x$ and all its neighbors in $T$ using at most $s-1$ hyperedges from $S$ and at most $s-1$ vertices from $C$ ($v$ and $w$ are not in $C$).  

There are at least $(k-1)-(s-1)=k-s$ remaining vertices in $C$.  
Each of these is contained in at least $k-s$ unused hyperedges of $S$.  
Thus, the remaining $k-s$ vertices of the tree can be mapped to distinct vertices from $C$, and the remaining edges of the tree may be assigned to distinct unused hyperedges of $S$.
\end{proof}

\begin{remark}\label{disjoint}
Note that by Claim \ref{core} and Claim \ref{Y}, if $\h$ is a multi-hypergraph with uniformity $r \ge k+1$ that does not contain a Berge copy of a tree on $k$ edges which is not a star, then $(k-1)$-clusters of $\h$ are edge-disjoint.
\end{remark}

\begin{lemma}\label{cluster} Let $k$ be a positive integer and let $T$ be a $k$-edge tree which is not a star.  Let $\h$ be a multi-hypergraph not necessarily uniform, not containing a Berge copy of $T$, and assume that each hyperedge in $\h$ has size at least $k+1$.  If there exists a reduced sub-hypergraph $\h' = (V',E')$  of $\h$ such that $d_{\h'}(v) \geq k-1$ for each $v \in V'$ and $\abs{h} \geq k-1$ for each $h\in E'$, then $\h'$ contains a $(k-1)$-cluster. 
Note that if $S$ is a $(k-1)$-cluster in $\h'$, then the correspondent edges of $S$ in $\h$ are a $(k-1)$-cluster.
\end{lemma}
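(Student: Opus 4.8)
The plan is to argue by contradiction. Suppose $\h'$ contains no $(k-1)$-cluster; I will build a Berge copy of $T$ in $\h$, contradicting the hypothesis that $\h$ is Berge-$T$-free. The no-cluster assumption is equivalent to the spread condition that no $k-1$ edges of $E'$ have $k-1$ common vertices, i.e.\ no $(k-1)$-subset of $V'$ is contained in $k-1$ distinct edges of $E'$. The two resources I would use are the minimum degree bound $d_{\h'}(v)\ge k-1$, which keeps supplying unused edges at any already-used vertex, and the reduced structure: each $h\in E'$ has a correspondent $f(h)\in E$ with $\abs{f(h)}\ge k+1$, so any chosen hyperedge still has a free vertex as long as at most $k$ vertices have been occupied. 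Note that a Berge copy of $T$ realised inside $\h'$ (placing the tree-vertices in the edges $h$ and using the correspondents $f(h)$ as defining edges) is automatically a Berge copy of $T$ in $\h$, since $h\subseteq f(h)$ and $f$ is injective.

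First I would embed $T^-:=T-\ell$, where $\ell$ is a leaf of $T$; since $T$ is not a star every vertex of $T$ has degree at most $k-1$, and I may additionally take the parent $w$ of $\ell$ to be a low-degree support vertex as provided by Claim~\ref{lowdegree}. Processing $V(T^-)$ in BFS order and adding the pendant edge to $\ell$ last, at the $i$-th step ($1\le i\le k-1$) only $i-1\le k-2$ edges are used and $i\le k$ vertices occupied, so the current parent image lies in an unused edge (its degree is at least $k-1$) whose correspondent still has a free vertex; hence $T^-$ embeds with $k-1$ distinct edges and $k$ distinct vertices. Attaching $\ell$ from $p=f_1(w)$ then needs one more unused edge of $E'$ through $p$, and this can fail only if all $\ge k-1$ edges at $p$ are already used, forcing $d_{\h'}(p)=k-1$ and, crucially, that all $k-1$ used edges contain $p$.

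The heart of the matter, and the step I expect to be the main obstacle, is to deal with this stuck configuration: $k-1$ hyperedges through a single common vertex $p$, carrying a Berge copy of $T^-$. One common vertex is not a cluster, so I must either upgrade it to $k-1$ common vertices or complete the embedding. Here the spread assumption should be fed back in through a rerouting argument: if the $k-1$ edges share fewer than $k-1$ vertices, then some of them meet the others in a proper subset, and by re-assigning which edge carries which tree-edge (and, if needed, re-choosing the deleted leaf or the support vertex via Claim~\ref{lowdegree}, in the spirit of Pósa rotations) one should free up an unused edge at the final parent and thereby complete a Berge copy of $T$, a contradiction. If instead no such rerouting exists, the rigidity forces the $k-1$ edges to share a common $(k-1)$-set, i.e.\ a $(k-1)$-cluster. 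A technical point to watch is that, to obtain the cluster inside $\h'$ rather than only among the correspondents in $\h$, the tree-vertices should be placed inside the edges $h\in E'$ themselves (of size $\ge k-1$) wherever possible, reserving the extra room in the correspondents $f(h)$ (of size $\ge k+1$) for the last vertices; balancing this room against the required overlap is exactly the delicate part.

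An equivalent and perhaps cleaner bookkeeping is to grow a common core directly: start from a single vertex (contained in $\ge k-1$ edges) and repeatedly enlarge the core $I$ by any vertex lying in at least $k-1$ of the edges through $I$. Reaching $\abs{I}=k-1$ yields the cluster immediately; otherwise the process halts at a spread-out core with $\abs{I}\le k-2$ through which at least $k-1$ edges pass, and the remaining work is to convert this spread, together with the global degree condition, into a full Berge embedding of $T$. Either way the routine part is the greedy construction and the degree count, while the genuine obstacle is turning ``$k-1$ edges through one vertex'' into ``$k-1$ edges through $k-1$ vertices''.
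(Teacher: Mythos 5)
Your proposal correctly identifies where the difficulty lies, but it does not resolve it, so there is a genuine gap. After the greedy embedding of $T-\ell$ you arrive at the ``stuck configuration'': $k-1$ used hyperedges all passing through the parent image $p$. At that point you write that a rerouting in the spirit of P\'osa rotations ``should free up an unused edge,'' and that if no rerouting exists, ``rigidity forces'' the $k-1$ edges to share a $(k-1)$-set. That dichotomy \emph{is} the lemma; no argument is given for it, and it is not routine: $k-1$ edges through a single common vertex is very far from a $(k-1)$-cluster, and nothing in your sketch shows that failure of all re-assignments of tree-edges to hyperedges (or all re-choices of the deleted leaf) produces $k-1$ common vertices rather than, say, two or three. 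The same gap reappears in your core-growing variant: when the process halts at $\abs{I}\le k-2$ you say the remaining work is to ``convert this spread\dots into a full Berge embedding,'' which is again exactly the unproved step.

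The paper closes this gap with a different and quite short device, which you may find instructive to compare. Instead of assuming ``no cluster'' globally, it fixes one edge $h_2\in E'$ and two vertices $v_1,v_2\in h_2$, and assumes only that some edge $h_3\in E'$ contains $v_2$ but not $v_1$. Since $T$ is not a star it contains a path $x_0x_1x_2x_3$ with $x_0$ a leaf; the embedding sends $v_1\to x_1$, $v_2\to x_2$, lets $h_2$ carry $x_1x_2$ and $h_3$ carry $x_2x_3$, embeds the rest of $T$ greedily exactly as in your first paragraph (degrees $\geq k-1$ supply fresh edges; the correspondent edges in $\h$, of size $\geq k+1$, supply fresh vertices for the last assignments), and attaches the leaf $x_0$ at $v_1$ \emph{last}. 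The point is that $h_3$ is one of the $k-1$ hyperedges already used and $h_3\not\ni v_1$, so at most $k-2$ used edges meet $v_1$, and $d_{\h'}(v_1)\geq k-1$ guarantees an unused edge at $v_1$ for the final leaf. In other words, the ``escape edge'' you hoped to recover by rotation is built directly into the contradiction hypothesis. Consequently, in a Berge-$T$-free $\h$, all vertices of $h_2$ are incident with the same set of hyperedges of $E'$; since $\abs{h_2}\geq k-1$ and this common set has at least $k-1$ edges, any $k-1$ of them intersect in at least $k-1$ vertices, which is the desired $(k-1)$-cluster. If you want to salvage your outline, the fix is to abandon the single-vertex bottleneck and instead run your embedding under this pairwise hypothesis on two vertices of one edge.
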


\begin{proof}
Let $h_2 \in E'$. 
We will show that every vertex in $h_2$ is contained in the same set of hyperedges in $E'$.  
Let $v_1,v_2 \in h_2$, and suppose by contradiction that there exists a hyperedge $h_3$ incident to $v_2$ and not to $v_1$. 
Enumerate the vertices of $T$ by $x_0,x_1,\dots,x_k$ in such a way that the graph induced by the vertices $x_0,x_1,\dots,x_i$ is connected for all $i$, $x_0$ is a leaf of $T$ and $x_0,x_1,x_2,x_3$ is a path of length 3 (such a path exists since $T$ is not a star). For each $i = 1,2,\dots,k$, the vertex $x_i$ is adjacent to exactly one vertex of smaller index, call the edge using $x_i$ and the vertex of smaller index $e_i$. 

We can embed $T$ into $\h$ in the following way. First assign $v_1$ to $x_1$, $h_2$ to $\{x_1,x_2\}$,  $v_2$ to  $x_2$, $h_3$ to $\{x_2,x_3\}$ and any vertex in $v_3 \in h_3\setminus\{v_1,v_2\}$ to $x_3$. For $i=4,\dots,k$, suppose $e_i = \{x_i,x_{j_i}\}$. 
Pick any hyperedge $h_i \in E'$ incident to $v_{j_i}$  and distinct from $h_2,h_3,\dots,h_{i-1}$ (such hyperedges exist since $d_{\h'}(v_{j_i}) \geq k-1$) and assign it to $e_i$.   If $i\leq k-1$, pick any $v_i \in h_i\setminus\{v_1,v_2,\dots,v_{i-1}\}$, and if $i= k$, then let $\tilde{h}_k$ be the correspondent hyperedge of $h_k$ in $\h$. 
As $\tilde{h}_k$ has size bigger than $k$, let $v_{k}$ be any vertex in $\tilde{h}_k\setminus\{v_1,v_2,\dots,v_{k-1}\}$. 
This vertex $v_k$ is assigned to $x_k$.
Finally, since $v_1$ is incident with at least $k-1$ hyperedges distinct to $h_3$, there is a hyperedge $h_1$ incident to $v_1$ and distinct from the already chosen hyperedges.
Let $\tilde{h}_1$ be the correspondent hyperedge of $h_1$.
Take any vertex in $\tilde{h}_1$ which has not been assigned yet and assign it to $x_0$.  
Thus, by replacing the edge $h_i$ with their correspondent hyperedges, we have found a Berge copy of $T$ in $\h$, a contradiction.

It follows that for any $v_1,v_2 \in h_2$, we have that $v_1$ and $v_2$ must be incident with the same set of hyperedges in $\h'$ (by assumption at least $k-1$), and so $\h'$ contains a $(k-1)$-cluster. 
\end{proof}

Lemma \ref{cluster} says that if $\h$ does not contain a Berge copy of a tree and we are able to find a large enough reduced sub-hypergraph, then $\h$ must have at least one $(k-1)$-cluster. The main idea of the proofs of the main results is to show that if $\h$ has too many hyperedges and no Berge copy of a tree, then after removing all $(k-1)$-clusters, we would still be able to find a large enough reduced sub-hypergraph. This would imply that there is still another $(k-1)$-cluster in $\h$, a contradiction.

\begin{proof}[Proof of Theorem \ref{Multi_tree_theorem}]
Let $T$ be a $k$-edge tree, which is not a star.
Suppose that $\h$ is an $n$-vertex $r$-uniform hypergraph  with at least $\frac{n(k-1)}{r}$ hyperedges such that $\h$ does not contain a Berge copy $T$, and let $G$ be the incidence bipartite graph of $\h$, i.e., the bipartite graph with color classes $V(\h)$ and $E(\h)$ where $v\in V(\h)$ is adjacent to $h \in E(\h)$ if and only if $v\in h$. 

Since $\displaystyle e(\h) \ge \frac{n(k-1)}{r}$, we have
%\begin{displaymath}
$\displaystyle
\frac{e(G)}{v(G)} = \frac{re(\h)}{n + e(\h)} = \frac{r}{\frac{n}{e(\h)}+1} \ge \frac{r}{\frac{r}{k-1}+1} = \frac{r(k-1)}{r+k-1},$ and note that
\begin{displaymath}
\frac{r(k-1)}{r+k-1} \geq k - 2 \Leftrightarrow r(k-1) \geq (k - 2)(r+k-1) = r(k-1) + (k-2)(k-1) - r \Leftrightarrow r \geq (k-2)(k-1).
\end{displaymath}
Hence $d(G) = \frac{2e(G)}{v(G)}\geq 2\left(\frac{r(k-1)}{r+k-1}\right) \geq k-2$, since $r \geq (k-2)(k-1).$ 
Suppose $\h$ has $t$ distinct $(k-1)$-clusters $S_1,S_2,\dots,S_t$ (recall that by Remark~\ref{disjoint} $(k-1)$-clusters are edge-disjoint). 
For each $S_i$, let $X_i$ be the set of vertices which are incident only with hyperedges of $S_i$, let $X = \bigcup_{i=1}^t X_i$ and let $Y$ be the set of vertices that are not in $X$ but are incident with at least one of the $(k-1)$-clusters. 
Let $G_1$ be the induced subgraph of $G$ obtained by removing $X$, $Y$ and all $(k-1)$-cluster hyperedges from the vertex set of $G$. We will show that $d(G_1) \geq d(G)$ (provided $G_1$ is not the empty graph).

The number of edges removed in $G$ is $\sum_{v\in X} d_{\h}(v) + \sum_{v\in Y} d_{\h}(v)$.  Since the degree of each $v\in X$ is at most $k-1$, we have that 
$\displaystyle \left(\sum_{v\in X} d_{\h}(v)\right) \leq \abs{X}(k-1)$. Also $X$ is only incident with the $(k-1)$-cluster hyperedges, thus we also have the bound
$\displaystyle   \left(\sum_{v\in X}d_{\h}(v)\right) \leq tr(k-1)$, and since the degree of each $v\in Y$ is at most $\frac{k-1}{2}$ (Claim~\ref{Y}), we have that $\displaystyle \left(\sum_{v\in Y} d_{\h}(v) \right) \leq \frac{(k-1)\abs{Y}}{2}$. 
Therefore 
\[\left(\sum_{v\in X}d_{\h}(v) + \sum_{v\in Y} d_{\h}(v) \right)(r+k-1)\] 

\[= \left(\sum_{v\in X}d_{\h}(v)\right)r + \left(\sum_{v\in X}d_{\h}(v)\right)(k-1) + \left(\sum_{v\in Y} d_{\h}(v) \right)(r+k-1)\]
\[\leq  \abs{X}r(k-1) + tr(k-1)^2 + \frac{(k-1)\abs{Y}}{2}(r+k-1) \leq r(k-1)(\abs{X} + t(k-1) + \abs{Y}),\]
where in the last inequality we used $\frac{r+k-1}{2} < r$. 
Thus, equality can hold only if $Y = \emptyset$.  

Rearranging we have
\begin{equation}
\label{g}
\left(\sum_{v\in X}d_{\h}(v) + \sum_{v\in Y} d_{\h}(v) \right)\le \frac{r(k-1)}{r+k-1}\left(\abs{X} + t(k-1) + \abs{Y}\right).
\end{equation}
The left-hand side of \eqref{g} is the number of removed edges, and the right-hand side is $d(G)/2$ times the number of removed vertices. 
Therefore, by Lemma~\ref{average}, if $G_1$ is non-empty, we have that  
\[d(G_1) \geq d(G)  \geq 2(k-2).\] 
 Hence, by Lemma~\ref{folklore} there is a subgraph $G_2$ of $G_1$ with minimum degree at least $k-1$. Suppose that $G_2$ has bipartite classes $A \subseteq V(\h)$ and $B \subseteq E(\h)$, and define $\h'$ by taking the vertex set  $V' = A$ and $E' = \{h\cap V': h \in B\}$.
The  condition on the minimum degree of $G_2$ implies that every vertex of $\h'$ has degree at least $k-1$ and every hyperedge of $\h'$ has size at least $k-1$.
Then by Lemma \ref{cluster}, $\h'$ contains a $(k-1)$-cluster, but this $(k-1)$-cluster corresponds to a $(k-1)$-cluster in $\h$ contradicting the fact that we removed every $(k-1)$-cluster from $\h$.
So $\h$ must contain a Berge copy of $T$, unless $G_1$ is empty. 

Note that, for $G_1$ to be empty it is necessary that $d(G) = 2\frac{r(k-1)}{r+k-1}$ and that  equality holds in the inequality~\eqref{g}. 
This can be possible only if $Y = \emptyset$ and 
\[\abs{X} = \frac{1}{k-1}\sum_{v\in X} d_{\h}(v) = tr.\]  
Since every $(k-1)$-cluster contains at least $r$ vertices, we have $\abs{X_i}\geq r$, and so %they are pairwise disjoint then 
each $X_i$ must have size exactly $r$, hence $\h$ is the disjoint union of $t$ hyperedges each with multiplicity $k-1$.  
Therefore the number of vertices would be a multiple of $r$ and $e(\h) = \frac{n(k-1)}{r}$.
Hence if $e(\h) \geq \frac{n(k-1)}{r}$, then $\h$ must contain a Berge copy of $T$, or $r|n$ and $\h$ is the disjoint union of $\frac{n}{r}$ hyperedges each with multiplicity $k-1$.
\end{proof}

\begin{remark}
For $r=(k-2)(k-1)$, the proof above also shows that if $e(\h) > \frac{n(k-1)}{r},$ then $\h$ must contain a Berge copy of $T$. However, the extremal construction does not follow from that proof. \end{remark}

\begin{proof}[Proof of Theorem \ref{Not_Star_Theorem}]
 Let $T$ be a $k$-edge tree which is not a star. We may assume $k>3$, since otherwise $T$ is a path, and we already know the result for paths.
 Let $\h$ be an $n$-vertex hypergraph with at least $\frac{n(k-1)}{r+1}$ hyperedges and $r \ge k(k-2)$. We will proceed by induction on the number of vertices $n$; the base cases $n \le r+1$ are trivial. 

If there is a set $U$ of size $r+1$ which is incident with at most  $k-1$ hyperedges, % then  by removing $X_i$ are done by induction, 
put $V'=V\setminus U$ and let $n' = |V'|=n-r-1$.  
By induction, $\h'$ the hypergraph induced by $V'$, has at most $\frac{n'(k-1)}{r+1}$ hyperedges and equality holds if $r+1|n'$ and $\h'$  is the disjoint union of cliques, unless $T$ is the balanced double star,
then it may contain a $(k-1,\frac{k-1}{2})$-regular two-sided hypergraph as described in the statement of the theorem.
Note that if one of the hyperedges incident with $U$ is incident with  a vertex $v$, $v \in V'$, then $v$ has degree at least $\floor{\frac{k+1}{2}}$, and  $v$ is in a $(k-1)$-cluster of $\h'$, thus we  have a Berge copy of $T$ from Claim~\ref{Y}.
Hence, the $k-1$ hyperedges incident with $U$ are contained in the vertex set $U$ and $\h$ has the desired structure.

Similarly to the proof of Theorem~\ref{Multi_tree_theorem}, we have that
\[\displaystyle\frac{e(G)}{v(G)} = \displaystyle\frac{re(\h)}{n+r(\h)} = \frac{r}{\frac{n}{e(\h)}+1} \ge \frac{r}{\frac{r+1}{k-1} + 1} = \frac{r(k-1)}{r+k},\]  and note that
\begin{displaymath}
\frac{r(k-1)}{r+k} \geq k - 2 \Leftrightarrow r(k-1) \geq (k - 2)(r+k) = r(k-1) + (k-2)k - r \Leftrightarrow r \geq k(k-2).
\end{displaymath}
Hence $d(G) = \frac{2e(G)}{v(G)}\geq 2\left(\frac{r(k-1)}{r+k-1}\right) \geq k-2$, since $r \geq (k-2)(k-1).$
Suppose that $\h$ has $t$ distinct $(k-1)$-clusters $S_1,S_2,\dots, S_t$.  Define the sets $X_1,\dots,X_t,X$ and $Y$ as in the proof of Theorem~\ref{Multi_tree_theorem}.
We are going to remove all vertices and hyperedges of these $(k-1)$-clusters as in the previous proof, and we will denote the incidence bipartite graph of $\h$ by $G$. 
%OZ Can we remove the above?
By $G_1$ we will denote the incidence bipartite graph of the hypergraph  $\h'$, obtained from $\h$ after removing the $(k-1)$-clusters.   

If $\abs{X_i} \geq r+1$ for some $i$,  then by taking $U\subseteq X_i$ of size $r+1$, we would have that $U$ is incident with at most $k-1$ hyperedges, and we would be done by induction. 
Hence we assume that $\abs{X_i}\leq r$.

For each $i$, with $\abs{X_i} = r$, we have \[\sum_{v\in X_i} d_{\h}(v) \leq (r-1)(k-1) +1 = \abs{X_i}(k-1)-(k-2),\] since any hyperedge is incident with at most $r-1$ vertices from $X_i$, with the possible exception of at most one hyperedge ($X_i$, if $X_i \in E(\h))$.   

For each $i$, with  $\abs{X_i} \leq r-1$, we have \[\sum_{v\in X_i} d_{\h}(v) \leq \abs{X_i}(k-1) \leq (r-1)(k-1).\] 

Let $a$ be the number of $X_i$, $1 \le i \le t$, with the size $r$. Then we have the following inequalities
\begin{equation}
\label{aaaa}
\displaystyle\sum_{v\in X}d_{\h}(v) = \sum_{\substack{{\abs{X_i}= r} \\ v\in X_i}} d_{\h}(v) + \sum_{\substack{{\abs{X_i} < r} \\ v\in X_i}} d_{\h}(v) \leq  t(r-1)(k-1) + a, 
%a\left((r-1)(k-1) +1\right) + (t-a)(r-1)(k-1)  =
\end{equation}

%In a similar way we deduce
and
\begin{equation}
\label{bbbb}
\sum_{v\in X}d_{\h}(v)  \leq \sum_{\substack{{\abs{X_i}= r} \\ v\in X_i}} (\abs{X_i})(k-1)-(k-2)) + \sum_{\substack{{\abs{X_i}< r} \\ v\in X_i}}\abs{X_i}(k-1) =\abs{X}(k-1) - a(k-2).
%a\left((\abs{X_i}-1)(k-1) +1\right) + (t-a)\abs{X_i}(k-1)  
%= \sum_{\substack{{\abs{X_i}= r} \\ v\in X_i}} d_{\h}(v) + \sum_{\substack{{\abs{X_i}< r} \\ v\in X_i}} d_{\h}(v)
\end{equation}

We also have 

\begin{equation}
\label{zyx}
tr(k-1) \leq \sum_{v\in X}d_{\h}(v) + \sum_{v \in Y} d_{\h}(v) \leq t(r-1)(k-1) + a + \frac{k-1}{2}\abs{Y},
\end{equation}
where in the first inequality follows from the fact the set the edges   of $t(k-1)$ hyperedges of the $t$ cluster are incident only with the set  $X\cup Y$, and the second inequality follows directly from Claim~\ref{Y} together with the fact that $d_{\h}(v) \leq k-1$ by definition.
%where in the first inequality the left-hand side is the contribution to the sum of degrees in $\displaystyle \sum_{v\in X}d_{\h}(v) + \sum_{v \in Y} d_{\h}(v)$ by the hyperedges from the $(k-1)$-clusters, and the second inequality follows directly from Claim~\ref{Y}.

Rearranging \eqref{zyx} yields 
\begin{equation}
\label{gh}
\displaystyle t(k-1) \leq a + \frac{\abs{Y}(k-1)}{2}. 
\end{equation}
The following three bounds come from multiplying inequality \eqref{bbbb} and \eqref{aaaa} by $r$ and $k$, respectively, and the bound from Claim \ref{Y} by $k+r$.
\begin{equation}
\label{ab}
    \left(\sum_{v\in X}d_{\h}(v)\right)r  \leq \abs{X}r(k-1) - ar(k-2).
\end{equation}

\begin{equation}
\label{cd}
    \left(\sum_{v\in X}d_{\h}(v)\right)k  \leq t(r-1)k(k-1) + ak.
\end{equation}

\begin{equation}
\label{ef}
    \left(\sum_{v\in Y} d_{\h}(v)\right)(k+r) \leq \frac{\abs{Y}(k-1)}{2}(k+r).
\end{equation}

%(Some side calculation $t(r-1)(k-1)k = tr(k-1)(k-1+1) -t(k-1)k  = tr(k-1)^2 +  t(k-1)(r-k)$)

Now we bound the number of deleted hyperedges times $r+k$. %(which is the denominator we have in the lower bound of $2e(G)/v(G)$). 
From \eqref{ab}, \eqref{cd}, \eqref{ef} and then \eqref{gh}, it follows that
$$\left(\sum_{v\in X}d_{\h}(v) + \sum_{v\in Y} d_{\h}(v)\right)(k+r) \leq \abs{X}r(k-1) - ar(k-2) + t(r-1)k(k-1) + ak + \frac{\abs{Y}(k-1)}{2}(k+r)$$ 
$$= \abs{X}r(k-1) - ar(k-2) + tr(k-1)^2 + t(k-1)(r-k) + ak + \frac{\abs{Y}(k-1)}{2}(k+r)$$
$$\leq  \abs{X}r(k-1) - ar(k-2) + tr(k-1)^2 + (r-k)\left(a + \frac{\abs{Y}(k-1)}{2}\right) + ak + \frac{\abs{Y}(k-1)}{2}(k+r)$$
$$= \abs{X}r(k-1) - ar(k-3) + tr(k-1)^2 + \abs{Y}(k-1)r = r(k-1)(\abs{X}+\abs{Y}+t(k-1)) - ar(k-3)$$ $$\leq  r(k-1)(\abs{X}+\abs{Y}+t(k-1)).$$
Rearranging we have
\begin{equation}
\label{f}
\left(\sum_{v\in X}d_{\h}(v) + \sum_{v\in Y} d_{\h}(v) \right)\le \frac{r(k-1)}{r+k}\left(\abs{X} + t(k-1) + \abs{Y}\right).
\end{equation}
The left-hand side of \eqref{f} is the number of removed edges, and the right-hand side of \eqref{f} is $d(G)/2$ times the number of removed vertices.

Hence, by Lemma~\ref{average} if $G_1$ is nonempty, we have that
\[d(G_1) \geq d(G)  \geq 2(k-2).\] 
Thus, by Lemma~\ref{folklore} we can find a subgraph $G_2$ of $G_1$ with minimum degree at least $k-1$. 
Suppose that $G_2$ has bipartite classes $A \subseteq V$ and $B \subseteq E(\h)$, define $\h'$ by taking the vertex set  $V' = A$ and hyperedge set $E' = \{h\cap V': h \in B\}$.  
The condition on the minimum degree of $G_2$ implies that every vertex of $\h$ has minimum degree at least $k-1$, and every hyperedge of $\h'$ has size at least $k-1$. 
Then by Lemma~\ref{cluster}, $\h'$ contains a $(k-1)$-cluster, which
contradicts that we have removed all $(k-1)$-clusters in $\h$.

For $G_1$ to be empty it is necessary that $d(G) = 2\frac{r(k-1)}{r+k}$, and for \eqref{f} to hold with equality, we must have that $e(\h) = \frac{n(k-1)}{r+1}$.  
To obtain equality in $\eqref{f}$,
it is necessary that $a=0$ (since $k>3$) and that every hyperedge contains one of the $X_i$. 
It then follows that $\abs{X} = t(r-1)$, and by \eqref{gh},  $\abs{Y} = 2t$. 
By \eqref{ef}, for every $v\in Y$, we have $d_{\h}(v) = \frac{k-1}{2}$, so $n = t(r+1)$.  Then $\h$ is a disjoint union of sets of $r+1$ vertices with $k-1$ hyperedges, and a hypergraph 
constructed from the %bipartite graph with 
classes $A=\{X_1,X_2\dots,X_t\}$ and $B = Y$, where $\{y,X_i\}$ is an edge if $X_i \cup \{y\}$ is a hyperedge of $\h$. 
Note that  $2t =  2\abs{A} = \abs{B}$, the degree of every vertex in $B$ is $\frac{k-1}{2}$ and every vertex of $A$ has degree $k-1$; that is, $\h$ is a $(k-1,\frac{k-1}{2})$-regular two-sided hypergraph.

%So $\h$ is a $(k-1,\frac{k-1}{2})$-regular two-sided hypergraph. % obtained by blowing up each vertex of class $A$ to size $r-1$ and taking the respective hyperedges of size $r$.
However, this is only possible if $k$ is odd, and it is simple to check that this construction contains a Berge copy of every $k$-edge tree which is not a balanced $k$-edge double star or the $k$-edge star.   
\end{proof}

\section*{Acknowledgments}

The research of the first three authors was partially supported by the National Research, Development and Innovation Office NKFIH, grants  K116769, K117879 and K126853. The research of the second author is partially supported by  Shota Rustaveli National Science Foundation of Georgia SRNSFG, grant number DI-18-118.  The research of the third author is supported by the Institute for Basic Science (IBS-R029-C1).

%\begin{proof}[Proof of Theorem \ref{Not_Star_Theorem}]
%Let $\h$ be a hypergraph with $\frac{n(k-1)}{r+1}+1$ hyperedges and $T$ a $k$-edge tree, not a star. We are going to find a Berge copy of $T$ in the hypergraph $\h$ which implies 
%\begin{displaymath}
%                      \ex_r(n,BT)\leq \floor{\frac{n(k-1)}{r+1}}
%\end{displaymath} 

%This bound is sharp at least for all $n$, divisible by $r+1$. Construction of such hypergraph is tacking $\frac{n}{r+1}$ connected components of size $r+1$ each, and taking any $k-1$ hyperedge in each copy.There is no berge copy of any $k$-edge tree just because there is not enough hyperedge in each copy.

%onsider the incidence bipartite graph $G$ with color classes $A$ and $B$ where $A=E(\h)$ and $B=V(\h)$, and vertex $v\in A$ is adjacent  to vertex $h\in B$ if and only if $v \in h$. $e(G)=r\cdot e(\h)$ and $v(G)=v(\h)+e(\h)$. We can find a subgraph $G'$ of a graph $G$, with minimum degree $\frac{e(G)}{v(G)}$, by greedily removing vertices with degree less than $\frac{e(G)}{v(G)}$ from $G$. \end{proof}

\end{document}